\newtheorem{lemma}{Lemma}[section]
\newtheorem{proposition}[lemma]{Proposition}
\newtheorem{theorem}[lemma]{Theorem}
\newtheorem{prop}[lemma]{Proposition}
\newtheorem{thm}[lemma]{Theorem}
\newtheorem{cor}[lemma]{Corollary}
\theoremstyle{definition}
\newtheorem{example}[lemma]{Example}
\newtheorem{definition}[lemma]{\sl Definition}
\theoremstyle{remark}
\newtheorem{remark}[lemma]{Remark}
\newcommand{\Hom}{\operatorname{Hom}}
\newcommand{\el}{\mathcal{L}}
\newcommand{\arr}{\rightarrow}
\newcommand{\cL}{\mathcal{L}}
\newcommand{\cO}{\mathcal{O}}
\numberwithin{equation}{section}
\begin{document}

\title{Morphisms to noncommutative projective lines}

\author{D. Chan}
\address{University of New South Wales}
\email{danielc@unsw.edu.au}

\author{A. Nyman}
\address{Western Washington University}
\email{adam.nyman@wwu.edu}
\keywords{}
\thanks{2010 {\it Mathematics Subject Classification. } Primary 14A22; Secondary 16S38}

\begin{abstract}
Let $k$ be a field, let ${\sf C}$ be a $k$-linear abelian category, let $\underline{\mathcal{L}}:=\{\mathcal{L}_{i}\}_{i \in \mathbb{Z}}$ be a sequence of objects in ${\sf C}$, and let $B_{\underline{\mathcal{L}}}$ be the associated orbit algebra.  We describe sufficient conditions on $\underline{\mathcal{L}}$ such that there is a canonical functor from the noncommutative space ${\sf Proj }B_{\underline{\mathcal{L}}}$ to a noncommutative projective line in the sense of \cite{abstractp1}, generalizing the usual construction of a map from a scheme $X$ to $\mathbb{P}^{1}$ defined by an invertible sheaf $\mathcal{L}$ generated by two global sections.  We then apply our results to construct, for every natural number $d>2$, a degree two cover of Piontkovski's $d$th noncommutative projective line \cite{piont} by a noncommutative elliptic curve in the sense of Polishchuk \cite{polish1}.
\end{abstract}

\maketitle

\pagenumbering{arabic}

\section{Introduction}
Although an algebraic description of morphisms into projective space is central to algebraic geometry, a similar description in noncommutative projective geometry has yet to be developed.  To understand the difficulties in giving such a description, we first review the commutative case.  Let $k$ be a base field.  Associated to a projective $k$-scheme $X$ and a line bundle $\mathcal{L}$ over $X$ generated by $n+1$ global sections which form a $k$-basis for $\Gamma(X,\mathcal{L})$, one gets a map
\begin{equation} \label{eqn.comcomp}
X \longrightarrow \mbox{Proj }\Gamma_{*}(X, \mathcal{L}) \longrightarrow \mbox{Proj }\mathbb{S}(\Gamma(X,\mathcal{L})) = \mathbb{P}^{n}
\end{equation}
Since $\Gamma_{*}(X, \mathcal{L}) := \bigoplus_{n \geq 0}\Gamma(X, \mathcal{L}^{\otimes n})$, this construction depends on a monoidal structure on ${\sf coh }X$, the category of coherent sheaves over $X$.  On the other hand, in the noncommutative world, one often studies categories without an obvious replacement for tensoring over $\mathcal{O}_{X}$, e.g. the category of right modules over a noncommutative ring.  Although there are situations in which this problem can be sidestepped, for example by working with bimodules instead of one-sided modules, it may be desirable to work without this constraint.  One solution to this problem is to replace $\mathcal{L}$ and the associated sequence $(\mathcal{L}^{\otimes i})_{i \in \mathbb{Z}}$ by a sequence of objects $\underline{\mathcal{L}} := (\mathcal{L}_{i})_{i \in \mathbb{Z}}$ in a Hom-finite abelian category ${\sf C}$.  This general perspective is explored in \cite{polish}.  In particular, when $\underline{\mathcal{L}}$ is {\it coherent}, Polishchuk associates to it a functor ${\sf C} \longrightarrow {\sf cohproj }B_{\underline{\mathcal{L}}}$, where $B_{\underline{\mathcal{L}}}$ is the orbit algebra defined by $\underline{\mathcal{L}}$ and ${\sf cohproj }B_{\underline{\mathcal{L}}}$ is the noncommutative analogue of the category of coherent sheaves over $\mbox{Proj }\Gamma_{*}(X, \mathcal{L})$ \cite[Theorem 2.4]{polish}.  This functor is a noncommutative analogue of the first composite of (\ref{eqn.comcomp}).

The purpose of this paper is to describe a noncommutative analogue of the second composite in (\ref{eqn.comcomp}) when the target space is a noncommutative projective line in the sense of \cite{abstractp1}.  The map is constructed in two stages.  First, we define the noncommutative analogue of the map of graded rings $\mathbb{S}(\Gamma(X,\mathcal{L})) \rightarrow \Gamma_{*}(X, \mathcal{L})$ inducing the second composite of (\ref{eqn.comcomp}).  Retaining the notation of the previous paragraph, this has the form of a homomorphism of $\mathbb{Z}$-algebras
\begin{equation} \label{eqn.ringsnc}
\mathbb{S}^{nc}(\underline{\mathcal{L}}) \longrightarrow B_{\underline{\mathcal{L}}},
\end{equation}
where $\mathbb{S}^{nc}(\underline{\mathcal{L}})$ is the noncommutative symmetric algebra of the {\it sequence} $\underline{\mathcal{L}}$, generalizing M. Van den Bergh's noncommutative symmetric algebra of a bimodule \cite{p1bundles}.  At this level of generality, little is known about the structure of $\mathbb{S}^{nc}(\underline{\mathcal{L}})$.  However, since we are ultimately interested in maps to noncommutative projective lines, we would like conditions under which $\mathbb{S}^{nc}(\underline{\mathcal{L}})$ is isomorphic to the noncommutative symmetric algebra of a suitably well behaved bimodule.  Our key insight is that the two concepts of noncommutative symmetric algebra coincide when $\underline{\mathcal{L}}$ is a {\it helix} (Definition \ref{defn.helix}).

The motivation behind the concept of a helix is simple:  If $f:X \rightarrow \mathbb{P}^{1}$ is a morphism of schemes, then $f^{*}$ applied to all twists of the Euler exact sequence
$$
0 \longrightarrow \mathcal{O} \longrightarrow \mathcal{O}(1) \oplus \mathcal{O}(1) \longrightarrow \mathcal{O}(2) \longrightarrow 0
$$
yields, for all $i \in \mathbb{Z}$, an exact sequence of sheaves
\begin{equation} \label{eqn.helixses}
0 \longrightarrow \mathcal{L}_{i} \longrightarrow \mathcal{L}_{i+1} \oplus \mathcal{L}_{i+1} \longrightarrow \mathcal{L}_{i+2} \longrightarrow 0
\end{equation}
over $X$.  A helix is, loosely speaking, a sequence of objects $(\mathcal{L}_{i})_{i \in \mathbb{Z}}$ in a $k$-linear abelian (not necessarily Hom-finite) category ${\sf C}$ satisfying short exact sequences analogous to (\ref{eqn.helixses}).  The notion is a specialization of the notion of helix in the sense of \cite{rudakov}.

In the second stage of the construction of the noncommutative analogue of the second composite of (\ref{eqn.comcomp}), we check that, under suitable conditions on $\underline{\mathcal{L}}$, the map of $\mathbb{Z}$-algebras (\ref{eqn.ringsnc}) induces a functor
\begin{equation} \label{eqn.spacenc}
{\sf Proj } B_{\underline{\mathcal{L}}} \longrightarrow {\sf Proj }\mathbb{S}^{nc}(\underline{\mathcal{L}})
\end{equation}
where, for a $\mathbb{Z}$-algebra $A$, ${\sf Proj }A := {\sf Gr }A/{\sf Tors }A$, ${\sf Gr }A$ is the category of graded right $A$-modules and ${\sf Tors }A$ is the full subcategory of modules, each of whose elements generates a right-bounded submodule.  Some care must be taken to check that, under the appropriate hypotheses on $A$, ${\sf Tors }A$ is a Serre subcategory of ${\sf Gr }A$. Unlike \cite[Theorem 2.4]{polish}, we work without the assumption of coherence, as it is not required for developing the basic machinery of noncommutative projective geometry (see \cite{bondal}).

We conclude the paper by applying our results to construct double covers of Piontkovski's noncommutative projective lines \cite{piont} by noncommutative elliptic curves in the sense of Polishchuk \cite{polish1}.  Recall that J.J. Zhang proved \cite[Theorem 0.1]{zhang} that every connected graded regular algebra of dimension two over $k$ generated in degree one is isomorphic to an algebra of the form \begin{equation} \label{eqn.algebraa}
A=k \langle x_{1}, \ldots, x_{n} \rangle/ (b)
\end{equation}
where $n \geq 2$, and $b=\sum_{i=1}^{n}x_{i}\sigma(x_{n-i})$ for some graded automorphism $\sigma$ of the free algebra.  Piontkovski defines $\mathbb{P}^{1}_{n}$ to be ${\sf cohproj }A$, where $A$ is as above.  He then proves that such categories share many homological properties with the category of coherent sheaves on (commutative) $\mathbb{P}^{1}$.  On the other hand, Polishchuk constructs, for an elliptic curve $X$ over $\mathbb{C}$ and a real number $\theta$, a hereditary abelian category ${\sf C}^{\theta}$ derived equivalent to $X$ and satisfying a form of Serre duality analogous to that of $X$.  We prove the following

\begin{theorem} \label{thm.main}
Let $d \geq 2$ and, for $d>2$, let $\theta_{d} = -\frac{2d}{d-2 + \sqrt{d^2-4}}$.
\begin{enumerate}
\item{} For each $p \in X$, there is a unique helix $\underline{\mathcal{L}}$ in ${\sf coh }X$ with $\mathcal{L}_{-1}=\mathcal{O}_{X}$ and $\mathcal{L}_{0}=\mathcal{O}_{X}(dp)$.  Furthermore, $B_{\underline{\mathcal{L}}}$ is independent of $p$.

\item{} For $d >2$, $B_{\underline{\mathcal{L}}}$ is nonnoetherian and coherent and ${\sf cohproj }B_{\underline{\mathcal{L}}}$ is equivalent to ${\sf C}^{\theta_{d}}$,

\item{} $\mathbb{S}^{nc}(\underline{\mathcal{L}})$ is coherent and ${\sf cohproj }\mathbb{S}^{nc}(\underline{\mathcal{L}})$ is equivalent to $\mathbb{P}^{1}_{d}$, and

\item{} If $d>2$, the functor (\ref{eqn.spacenc}) restricts to a functor ${\sf C}^{\theta_{d}} \longrightarrow \mathbb{P}^{1}_{d}$ which is a double cover in a suitable sense, while if $d=2$, the functor (\ref{eqn.spacenc}) restricts to a functor ${\sf coh }X \rightarrow {\sf coh }\mathbb{P}^{1}$ which is that induced by a double cover $X \rightarrow \mathbb{P}^{1}$.
\end{enumerate}
\end{theorem}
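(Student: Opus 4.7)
The plan is to prove the four claims in turn. Part (1) will be a construction using Atiyah's classification of vector bundles on an elliptic curve; part (2) will reduce to identifying the helix as an ample sequence inside the appropriate Polishchuk category and then invoking a reconstruction theorem; part (3) will reduce to the identification of $\mathbb{S}^{nc}(\underline{\mathcal{L}})$ with Van den Bergh's symmetric algebra that was already established in the earlier sections; part (4) will then be largely a functorial consequence of (2) and (3), together with a direct classical verification when $d=2$.

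For \textbf{(1)}, I construct $\underline{\mathcal{L}}$ recursively: given $\mathcal{L}_{i-1}$ and $\mathcal{L}_i$, set $\mathcal{L}_{i+1}$ to be the cokernel of the canonical evaluation map $\mathcal{L}_{i-1}\to \Hom_{\mathcal{O}_X}(\mathcal{L}_{i-1},\mathcal{L}_i)^{*}\otimes_{k}\mathcal{L}_i$, and symmetrically in the negative direction. A Riemann--Roch computation shows $\dim_{k}\Hom_{\mathcal{O}_X}(\mathcal{L}_{i-1},\mathcal{L}_i)=d$ throughout, using the inductive fact that each $\mathcal{L}_i$ is a stable bundle whose class in $K_{0}(X)$ satisfies $[\mathcal{L}_{i+1}]=d\,[\mathcal{L}_i]-[\mathcal{L}_{i-1}]$; stability (plus Serre duality) yields the $\operatorname{Ext}^{1}$-vanishing needed to recognize the defining helix short exact sequence, while Atiyah's classification pins down each $\mathcal{L}_i$ uniquely. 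Independence of $p$ follows from the fact that the translation automorphism $\tau_{p'-p}:X\to X$ carries the helix based at $p$ onto the helix based at $p'$, inducing an isomorphism of the orbit algebras.

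For \textbf{(2)}, the rank-degree recursion from (1) has characteristic polynomial $x^{2}-dx+1$; for $d>2$ the roots $\alpha,\beta=(d\pm\sqrt{d^{2}-4})/2$ are irrational, and the short manipulation $d-2+\sqrt{d^{2}-4}=2(\alpha-1)$ gives $\theta_{d}=-d/(\alpha-1)$, the value of Polishchuk's slope parameter that places the helix inside the heart $\mathsf{C}^{\theta_{d}}$. I then verify that $\underline{\mathcal{L}}$ is an ample sequence in $\mathsf{C}^{\theta_{d}}$ and apply Polishchuk's reconstruction theorem (\cite{polish},\cite{polish1}) to deduce ${\sf cohproj}\,B_{\underline{\mathcal{L}}}\simeq\mathsf{C}^{\theta_{d}}$. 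Coherence and nonnoetherianness of $B_{\underline{\mathcal{L}}}$ then transfer from the corresponding properties of $\mathsf{C}^{\theta_{d}}$ at irrational slope. For \textbf{(3)}, the helix hypothesis lets the general results of the earlier sections identify $\mathbb{S}^{nc}(\underline{\mathcal{L}})$ with Van den Bergh's noncommutative symmetric algebra \cite{p1bundles} of a $k$-$k$-bimodule of $k$-dimension $d$, whose associated $\mathbb{Z}$-graded algebra is of Zhang's form (\ref{eqn.algebraa}) with $n=d$; hence ${\sf cohproj}\,\mathbb{S}^{nc}(\underline{\mathcal{L}})\simeq\mathbb{P}^{1}_{d}$ by Piontkovski's definition, with coherence following from Zhang's structural results. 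For \textbf{(4)}, I show that (\ref{eqn.ringsnc}) makes $B_{\underline{\mathcal{L}}}$ into a graded right module over $\mathbb{S}^{nc}(\underline{\mathcal{L}})$ which is finitely generated and of rank two in each degree; this is the module-theoretic sense of double cover and, combined with (2) and (3), ensures that (\ref{eqn.spacenc}) restricts to the desired functor $\mathsf{C}^{\theta_{d}}\to\mathbb{P}^{1}_{d}$. For $d=2$, a direct check shows that $|2p|$ defines a classical degree-two morphism $X\to\mathbb{P}^{1}$ whose pullback of the standard helix on $\mathbb{P}^{1}$ coincides with $\underline{\mathcal{L}}$, so (\ref{eqn.spacenc}) reduces to the classical pullback along this double cover.

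The main obstacle is the ampleness verification in (2): ampleness must be checked in the tilted abelian category $\mathsf{C}^{\theta_{d}}$ rather than in ${\sf coh}\,X$, which requires controlling $\theta_{d}$-(semi)stability of the $\mathcal{L}_i$ together with the vanishing of the tilted $\operatorname{Ext}^{1}$ groups along the helix. Establishing coherence of $B_{\underline{\mathcal{L}}}$ in the absence of noetherianness, as well as making precise the sense in which $B_{\underline{\mathcal{L}}}$ is a ``rank-two algebra'' over $\mathbb{S}^{nc}(\underline{\mathcal{L}})$ for the double cover interpretation, are further delicate points that likely require explicit computations inside the Polishchuk category.
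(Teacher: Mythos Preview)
Your overall strategy matches the paper's almost exactly: recursive construction via mutation for (1) with translation giving independence of $p$; slope computation, ampleness of the helix in ${\sf C}^{\theta_d}$, and Polishchuk's reconstruction for (2); identification of $\mathbb{S}^{nc}(\underline{\mathcal{L}})$ with the symmetric algebra of a rank-$d$ bimodule for (3); and the rank-two module structure of $B_{\underline{\mathcal{L}}}$ over $\mathbb{S}^{nc}(\underline{\mathcal{L}})$ for (4).

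There is one genuine weak point. Your plan to obtain nonnoetherianness of $B_{\underline{\mathcal{L}}}$ by ``transfer from the corresponding properties of ${\sf C}^{\theta_d}$ at irrational slope'' does not work as stated: noetherianness of a coordinate ring is not an invariant of its ${\sf cohproj}$, and in any case ${\sf C}^{\theta_d}$ is a perfectly nice hereditary abelian category whose properties do not obviously force nonnoetherianness of any particular coordinate ring. The paper instead proves the double-cover statement \emph{first}, via an explicit Hilbert series computation showing $h_B(t)=(1+t^2)/(1-dt+t^2)$ and hence $e_iB \cong e_i\mathbb{S}^{nc}(V)\oplus f_{i+2}\mathbb{S}^{nc}(V)$ as $\mathbb{S}^{nc}(V)$-modules; nonnoetherianness then follows from the known nonnoetherianness of $\mathbb{S}^{nc}(V)$ for $d>2$ together with the Stephenson--Zhang growth result for noetherian graded rings. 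In other words, the rank-two structure you placed in (4) is actually the key input for the nonnoetherian claim in (2), and you should reorder accordingly. A smaller stylistic difference: for (1) the paper works through Kuleshov's mutation lemma for injective pairs of bundles rather than invoking Atiyah directly, but your approach via stability and Atiyah is equivalent.
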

We remark that in the situation of the theorem, the noncommutative analogue of the first composite of (\ref{eqn.comcomp}) does not exist if $d>2$, as the sequence $\underline{\mathcal{L}}$ is not coherent (or even projective in the sense of \cite[Section 2]{polish}) in the category ${\sf coh }X$.

The results in this paper suggest that one avenue towards constructing examples of higher-genus coherent noncommutative curves is through the construction of helices in the category ${\sf coh }Y$, where $Y$ is a genus $g$ curve (the study of helices over $D^{b}(X)$, where $X$ is a projective variety, has been suggested implicitly in \cite{polish1}).  We have not explored this avenue.  In addition, by varying the shape of the exact sequences in the definition of helix, one could extend the techniques of this paper to construct maps to noncommutative projective spaces of larger dimension.  This latter project is a subject of current investigation.

\section{Orbit algebras and noncommutative symmetric algebras}
In this section, after reviewing some basic terminology from the theory of $\mathbb{Z}$-algebras, we define orbit algebras and noncommutative symmetric algebras associated to a sequence of objects $\underline{\mathcal{L}}:=(\mathcal{L}_{i})_{i \in \mathbb{Z}}$ in a $k$-linear abelian category ${\sf C}$.

We first recall (from \cite[Section 2]{quadrics}) the notion of a $\mathbb{Z}$-algebra.  A {\em $\mathbb{Z}$-algebra} $A$ is a $k$-linear pre-additive category whose objects are indexed by $\mathbb{Z}$ and denoted $\{\mathcal{O}_{i}\}_{i \in \mathbb{Z}}$, and whose morphisms are denoted $A_{ij} := \operatorname{Hom}(\mathcal{O}_{-j}, \mathcal{O}_{-i})$.  We will abuse terminology and call the ring $A = \bigoplus_{i,j}A_{ij}$ with multiplication given by composition a $\mathbb{Z}$-algebra. We let $e_{i}$ denote the identity in $A_{ii}$.

Let $A$ be an $\mathbb{Z}$-algebra. A {\em (graded) right $A$-module} is a graded abelian group $M = \bigoplus_{i \in I} M_i$ with multiplication maps $M_i \times A_{ij} \rightarrow M_j$ satisfying the usual module axioms (see \cite{quadrics} for more details). We let ${\sf Gr }A$ denote the category of graded right $A$-modules.  We recall \cite[Section 2]{quadrics} that ${\sf Gr }A$ is a Grothendieck category, hence has enough injectives.

For the remainder of this section, ${\sf C}$ will denote a $k$-linear abelian category and $\underline{\mathcal{L}} := (\mathcal{L}_{i})_{i \in \mathbb{Z}}$ will denote a sequence of objects in ${\sf C}$.

\begin{definition}
The {\it orbit algebra of }$\underline{\mathcal{L}}$, denoted $B_{\underline{\mathcal{L}}}$, is the $\mathbb{Z}$-algebra with
$$
(B_{\underline{\mathcal{L}}})_{ij}=\operatorname{Hom}(\el_{-j}, \el_{-i})
$$
and with multiplication induced by composition.
\end{definition}

\begin{definition}
Let $D_{i} = \operatorname{End} \mathcal{L}_{i}$.  The {\it noncommutative symmetric algebra of} $\underline{\mathcal{L}}$ is the $\mathbb{Z}$-algebra generated in its $(-(i+1), -i)$ component by the $D_{i+1}-D_{i}$-bimodule  $\operatorname{Hom}(\mathcal{L}_{i}, \mathcal{L}_{i+1})$ and with relations generated by the kernels of the compositions
$$
\operatorname{Hom}(\mathcal{L}_{i+1}, \mathcal{L}_{i+2}) \otimes_{D_{i+1}} \operatorname{Hom}(\mathcal{L}_{i}, \mathcal{L}_{i+1}) \rightarrow \operatorname{Hom}(\mathcal{L}_{i}, \mathcal{L}_{i+2}).
$$
We denote it by $\mathbb{S}^{nc}(\underline{\mathcal{L}})$.
\end{definition}

The next example shows that coordinate rings for both noncommutative projective lines and quantum projective planes are noncommutative symmetric algebras of appropriate sequences.
\begin{example}
Suppose $M$ is a bimodule over a pair of division rings which has symmetric duals (see \cite[Definition 2.1.1]{bel} for a definition), and suppose the product of left and right dimensions of $M$ is $\geq 4$.  Let $\mathbb{S}^{nc}(M)$ denote the noncommutative symmetric algebra of $M$ (see \cite[Section 4.1]{p1bundles} or \cite[Definition Section 3.2]{abstractp1} for a definition).  By \cite[Theorem 5.4]{bel}, $\mathbb{S}^{nc}(M)$ is coherent.  Therefore by \cite[Theorem 7.7]{abstractp1}, there is a sequence $\underline{\mathcal{L}}$ in ${\sf cohproj }\mathbb{S}^{nc}(M)$ which is a helix in the sense of \cite{abstractp1}.  Therefore, by Remark \ref{remark.comp} and Proposition \ref{prop.ncsym} in the next section, $\mathbb{S}^{nc}(M) \cong \mathbb{S}^{nc}(\underline{\mathcal{L}})$.

Next, let $C$ denote a smooth elliptic curve over $\mathbb{C}$.  Consider the elliptic helix $(\mathcal{L}_{i})_{i \in \mathbb{Z}}$ in \cite[Section 3.2]{quadrics}.  We construct a new sequence of line bundles over the elliptic curve $C$, $\underline{\mathcal{P}} := (\mathcal{P}_{i})_{i \in \mathbb{Z}}$, as follows:
$$
\mathcal{P}_{i} = \begin{cases} \mathcal{O}_{C} & \mbox{ if $i=-1$} \\
\mathcal{L}_{0} \otimes \mathcal{L}_{-1} \otimes \cdots \otimes \mathcal{L}_{-i} & \mbox{ if $i\geq 0$} \\
\mathcal{L}^{-1}_{1} \otimes \mathcal{L}^{-1}_{2} \otimes \cdots \otimes \mathcal{L}^{-1}_{-i-1} & \mbox{ if $i<-1$}
\end{cases}
$$
Then there is a canonical isomorphism of $\mathbb{Z}$-algebras $\mathbb{S}^{nc}(\underline{\mathcal{P}}) \rightarrow A$, where $A$ is the quadratic three-dimensional $\mathbb{Z}$-algebra corresponding to the triple $(C, \mathcal{L}_{0}, \mathcal{L}_{1})$ \cite[Section 3.2]{quadrics}.
\end{example}

The following result constructs the map (\ref{eqn.ringsnc}) from the introduction.  It's proof follows immediately from the definitions.

\begin{prop} \label{prop.canonmap}
Composition induces a homomorphism of $\mathbb{Z}$-algebras
$$
\mathbb{S}^{nc}(\underline{\mathcal{L}}) \rightarrow B_{\underline{\mathcal{L}}}
$$
which is an isomorphism in degrees zero and one.
\end{prop}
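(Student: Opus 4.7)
The plan is to build the homomorphism from its defining universal property and then read off what happens in degrees zero and one. The noncommutative symmetric algebra $\mathbb{S}^{nc}(\underline{\mathcal{L}})$ is presented by specifying, for each $i$, a generating $D_{i+1}$-$D_i$-bimodule $\operatorname{Hom}(\mathcal{L}_i, \mathcal{L}_{i+1})$ in component $(-(i+1),-i)$ together with, in component $(-(i+2),-i)$, the relation submodule $R_i := \operatorname{ker}\bigl(\operatorname{Hom}(\mathcal{L}_{i+1}, \mathcal{L}_{i+2}) \otimes_{D_{i+1}} \operatorname{Hom}(\mathcal{L}_i, \mathcal{L}_{i+1}) \rightarrow \operatorname{Hom}(\mathcal{L}_i, \mathcal{L}_{i+2})\bigr)$. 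Thus a $\mathbb{Z}$-algebra map out of $\mathbb{S}^{nc}(\underline{\mathcal{L}})$ is determined by a compatible family of bimodule maps on the generating pieces whose induced maps on tensor products annihilate the $R_i$.

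For the target $B_{\underline{\mathcal{L}}}$, note that $(B_{\underline{\mathcal{L}}})_{-(i+1),-i} = \operatorname{Hom}(\mathcal{L}_i, \mathcal{L}_{i+1})$, so I take the generating data to be the identity maps on the bimodules $\operatorname{Hom}(\mathcal{L}_i, \mathcal{L}_{i+1})$. The multiplication in $B_{\underline{\mathcal{L}}}$ between the $(-(i+2),-(i+1))$ and $(-(i+1),-i)$ components is, by definition, composition of morphisms $\operatorname{Hom}(\mathcal{L}_{i+1}, \mathcal{L}_{i+2}) \otimes \operatorname{Hom}(\mathcal{L}_i, \mathcal{L}_{i+1}) \to \operatorname{Hom}(\mathcal{L}_i, \mathcal{L}_{i+2})$, factoring through the balanced tensor product $\otimes_{D_{i+1}}$. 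This is exactly the map whose kernel defines $R_i$, so $R_i$ is sent to zero. By the universal property, these data extend uniquely to a $\mathbb{Z}$-algebra homomorphism $\mathbb{S}^{nc}(\underline{\mathcal{L}}) \rightarrow B_{\underline{\mathcal{L}}}$.

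It remains to check the isomorphism claim in low degrees. In degree zero, each diagonal component of $\mathbb{S}^{nc}(\underline{\mathcal{L}})$ is by definition $D_i = \operatorname{End} \mathcal{L}_i$, which literally equals $(B_{\underline{\mathcal{L}}})_{-i,-i}$, and the map is the identity. In degree one, the component $(\mathbb{S}^{nc}(\underline{\mathcal{L}}))_{-(i+1),-i}$ is the generating bimodule $\operatorname{Hom}(\mathcal{L}_i, \mathcal{L}_{i+1})$ (no relations being imposed in this degree), and the map is again the identity onto $(B_{\underline{\mathcal{L}}})_{-(i+1),-i}$.

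There is essentially no obstacle here; every step is a direct unwinding of the definitions, with the only thing to verify being that composition in $B_{\underline{\mathcal{L}}}$ really does factor over the balanced tensor product $\otimes_{D_{i+1}}$ so that the relations $R_i$ make sense as elements mapping to zero, which is immediate from the $D_{i+1}$-bilinearity of composition of morphisms in the $k$-linear category ${\sf C}$.
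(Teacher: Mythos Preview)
Your proof is correct and is essentially the same approach as the paper, which simply remarks that the result ``follows immediately from the definitions.'' You have merely spelled out in detail the universal-property argument and the identification of the degree-zero and degree-one components that the paper leaves implicit.
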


We next observe that the map in Proposition \ref{prop.canonmap} satisfies several natural compatibilities.
\begin{lemma} \label{lemma.functorringmap}
Suppose ${\sf C}$ is a $k$-linear abelian category and $\underline{\mathcal{L}}$ is a sequence in ${\sf C}$.

\begin{enumerate}

\item{}  If ${\sf D}$ is a $k$-linear abelian category, $F:{\sf C} \rightarrow {\sf D}$ is a $k$-linear functor and $F(\underline{\mathcal{L}})$ denotes the sequence $(F(\mathcal{L}_{i}))_{i \in \mathbb{Z}}$ in ${\sf D}$, then $F$ induces $\mathbb{Z}$-algebra homomorphisms
$$
B_{\underline{\mathcal{L}}} \longrightarrow B_{F(\underline{\mathcal{L}})}
$$
and
$$
\mathbb{S}^{nc}(\underline{\mathcal{L}}) \longrightarrow \mathbb{S}^{nc}(F(\underline{\mathcal{L}}))
$$
which are isomorphisms if $F$ is an equivalence.  Furthermore, these maps are compatible with the canonical map from Proposition \ref{prop.canonmap}.

\item{} If $\underline{\mathcal{M}}$ is a sequence in ${\sf C}$ and there are isomorphisms $\phi_{i}:\mathcal{L}_{i} \longrightarrow \mathcal{M}_{i}$ for all $i \in \mathbb{Z}$, then the induced isomorphisms
    $$
    \operatorname{Hom}(\mathcal{L}_{i},\mathcal{L}_{j}) \longrightarrow \operatorname{Hom}(\mathcal{M}_{i},\mathcal{M}_{j})
    $$
give isomorphisms of $\mathbb{Z}$-algebras
$$
B_{\underline{\mathcal{L}}} \longrightarrow B_{\underline{\mathcal{M}}}
$$
and
$$
\mathbb{S}^{nc}(\underline{\mathcal{L}}) \longrightarrow \mathbb{S}^{nc}(\underline{\mathcal{M}})
$$
compatible with the map from Proposition \ref{prop.canonmap}.
\end{enumerate}
\end{lemma}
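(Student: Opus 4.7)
The plan is to prove both parts by direct construction, verifying compatibility with the multiplicative structures and then checking the diagrams commute with the canonical map of Proposition \ref{prop.canonmap}.

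For part (1), the orbit algebra case is immediate: since $(B_{\underline{\mathcal{L}}})_{ij} = \operatorname{Hom}(\mathcal{L}_{-j},\mathcal{L}_{-i})$ and $F$ is a $k$-linear functor, applying $F$ gives $k$-linear maps $\operatorname{Hom}(\mathcal{L}_{-j},\mathcal{L}_{-i}) \to \operatorname{Hom}(F\mathcal{L}_{-j}, F\mathcal{L}_{-i})$ compatible with composition by functoriality. These assemble into the asserted $\mathbb{Z}$-algebra homomorphism, clearly an isomorphism if $F$ is one (since $F$ is then fully faithful). For the symmetric algebra case, let $D_i = \operatorname{End}\mathcal{L}_i$ and $E_i = \operatorname{End} F(\mathcal{L}_i)$; functoriality of $F$ gives a $k$-algebra homomorphism $D_i \to E_i$. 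We then have a commutative square
\[
\begin{CD}
\operatorname{Hom}(\mathcal{L}_{i+1},\mathcal{L}_{i+2}) \otimes_{D_{i+1}} \operatorname{Hom}(\mathcal{L}_i,\mathcal{L}_{i+1}) @>>> \operatorname{Hom}(\mathcal{L}_i,\mathcal{L}_{i+2}) \\
@VVV @VVV \\
\operatorname{Hom}(F\mathcal{L}_{i+1},F\mathcal{L}_{i+2}) \otimes_{E_{i+1}} \operatorname{Hom}(F\mathcal{L}_i,F\mathcal{L}_{i+1}) @>>> \operatorname{Hom}(F\mathcal{L}_i,F\mathcal{L}_{i+2})
\end{CD}
\]
(the left vertical factors through the natural map $\otimes_{D_{i+1}} \to \otimes_{E_{i+1}}$ induced by $D_{i+1} \to E_{i+1}$). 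Hence the kernel of the top composition maps into the kernel of the bottom, which shows that the $k$-algebra map on tensor algebras of the generators descends to a $\mathbb{Z}$-algebra homomorphism $\mathbb{S}^{nc}(\underline{\mathcal{L}}) \to \mathbb{S}^{nc}(F(\underline{\mathcal{L}}))$. When $F$ is an equivalence, the induced maps on generating bimodules and on the kernels are isomorphisms, so the resulting algebra map is an isomorphism.

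For part (2), define ring maps $D_i \to D_i' := \operatorname{End}\mathcal{M}_i$ by $d \mapsto \phi_i \circ d \circ \phi_i^{-1}$, and bimodule isomorphisms $\operatorname{Hom}(\mathcal{L}_i,\mathcal{L}_j) \to \operatorname{Hom}(\mathcal{M}_i,\mathcal{M}_j)$ by $f \mapsto \phi_j \circ f \circ \phi_i^{-1}$. Conjugation by the $\phi_i$ preserves composition since intermediate $\phi_i \circ \phi_i^{-1}$ factors cancel telescopically, yielding the $\mathbb{Z}$-algebra isomorphism of orbit algebras. The same formula on $\operatorname{Hom}(\mathcal{L}_i,\mathcal{L}_{i+1})$ identifies the generating bimodules for the two symmetric algebras, and the commutative square analogous to the one in part (1) shows the relation spaces correspond, inducing the desired isomorphism of $\mathbb{Z}$-algebras.

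Finally, compatibility with the canonical map of Proposition \ref{prop.canonmap} in both cases reduces to the fact that that map is itself induced by composition of morphisms in ${\sf C}$, and both $F$ (in part (1)) and conjugation by the $\phi_i$ (in part (2)) respect composition. Concretely, one checks that for each pair of consecutive generators the naturality square
\[
\begin{CD}
\operatorname{Hom}(\mathcal{L}_{i+1},\mathcal{L}_{i+2}) \otimes \operatorname{Hom}(\mathcal{L}_i,\mathcal{L}_{i+1}) @>>> \operatorname{Hom}(\mathcal{L}_i,\mathcal{L}_{i+2}) \\
@VVV @VVV \\
\operatorname{Hom}(F\mathcal{L}_{i+1},F\mathcal{L}_{i+2}) \otimes \operatorname{Hom}(F\mathcal{L}_i,F\mathcal{L}_{i+1}) @>>> \operatorname{Hom}(F\mathcal{L}_i,F\mathcal{L}_{i+2})
\end{CD}
\]
commutes (and similarly for longer compositions in part (2)), which is exactly functoriality. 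No serious obstacle is expected; the only bookkeeping subtlety is tracking how the change of endomorphism ring interacts with the tensor product in the definition of $\mathbb{S}^{nc}$, which the commutative square above handles cleanly.
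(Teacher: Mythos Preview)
Your proof is correct and proceeds by the natural direct verification. Note, however, that the paper provides no proof for this lemma at all: it is stated and immediately followed by new terminology, so the authors evidently regard it as a routine consequence of the definitions. Your argument supplies exactly the bookkeeping they omit, and the approach (functoriality for the orbit algebra, then checking that generators and degree-two relations are carried to generators and relations for $\mathbb{S}^{nc}$, with compatibility following from preservation of composition) is the expected one.
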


We introduce some terminology which will be used for the remainder of this section.  Recall, from \cite{quadrics}, that if $A$ is a $\mathbb{Z}$-algebra and $i \in \mathbb{Z}$, then $A(i)$ is the canonical $\mathbb{Z}$-algebra with $A(i)_{jk}=A_{j+i, k+i}$.  If $A \cong A(i)$ for some $i$, then $A$ is called {\it $i$-periodic}.  If $C$ is a $\mathbb{Z}$-graded algebra, we write $\overline{C}$ for the canonical $\mathbb{Z}$-algebra with $\overline{C}_{ij} := C_{j-i}$.  By \cite[Lemma 2.4]{quadrics}, if $A$ is a 1-periodic $\mathbb{Z}$-algebra, then there exists a $\mathbb{Z}$-graded algebra $\widehat{A}$ such that $A \cong \overline{\widehat{A}}$, and in this case there is an equivalence ${\sf Gr }A \longrightarrow {\sf Gr }\widehat{A}$.

\begin{lemma} \label{lemma.oneperiod}
Suppose $A$ and $B$ are 1-periodic $\mathbb{Z}$-algebras via isomorphisms $\theta_{A}:A \longrightarrow A(1)$ and $\theta_{B}:B \longrightarrow B(1)$.  If $\phi:A \longrightarrow B$ is a homomorphism such that the diagram
$$
\begin{CD}
A & \overset{\phi}{\longrightarrow} & B \\
@V{\theta_{A}}VV @VV{\theta_{B}}V \\
A(1) & \underset{\phi(1)}{\longrightarrow} & B(1)
\end{CD}
$$
commutes, then the induced map of $\mathbb{Z}$-graded algebras $\widehat{\phi}:\widehat{A} \longrightarrow \widehat{B}$ is a homomorphism.
\end{lemma}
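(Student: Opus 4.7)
The plan is to unwind the recipe $(A, \theta_{A}) \mapsto \widehat{A}$ explicitly and then check that $\widehat{\phi}$ is multiplicative by direct calculation, the only nontrivial input being an iterated form of the commuting square in the hypothesis.

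Recall from the proof of \cite[Lemma 2.4]{quadrics} that one may realize $\widehat{A}$ by setting $\widehat{A}_{n} := A_{0n}$, with multiplication $\widehat{A}_{m} \otimes \widehat{A}_{n} \to \widehat{A}_{m+n}$ defined as $x \otimes y \mapsto x \circ \theta_{A}^{m}(y)$. Here $\theta_{A}^{m} \colon A \to A(m)$ denotes the $m$-fold iterate of $\theta_{A}$, whose $(0,n)$-component sends $A_{0n}$ into $A_{m,m+n}$, and $\circ$ is the composition in the $\mathbb{Z}$-algebra $A$; the analogous recipe describes $\widehat{B}$. The candidate map is then $\widehat{\phi}_{n} := \phi_{0n} \colon A_{0n} \to B_{0n}$.

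The key observation is that the given square upgrades to the identity $\phi(m) \circ \theta_{A}^{m} = \theta_{B}^{m} \circ \phi$ for every $m \in \mathbb{Z}$: the hypothesis is exactly the case $m=1$, and one proves the general case by induction on $|m|$ using the translates of that square (and inverting $\theta_{A}, \theta_{B}$ to handle $m<0$). Granting this, for $x \in \widehat{A}_{m}$ and $y \in \widehat{A}_{n}$ one computes
\[
\widehat{\phi}(x \cdot y) \;=\; \phi\bigl(x \circ \theta_{A}^{m}(y)\bigr) \;=\; \phi(x) \circ \phi\bigl(\theta_{A}^{m}(y)\bigr) \;=\; \widehat{\phi}(x) \circ \theta_{B}^{m}\bigl(\widehat{\phi}(y)\bigr) \;=\; \widehat{\phi}(x) \cdot \widehat{\phi}(y),
\]
invoking that $\phi$ is a $\mathbb{Z}$-algebra homomorphism for the second equality and iterated $\theta$-equivariance for the third.

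The only genuine obstacle is bookkeeping: one must take care that the isomorphism $A \cong \overline{\widehat{A}}$ supplied by \cite[Lemma 2.4]{quadrics} is compatible with the multiplication recipe used above, so that $\widehat{\phi}$ as defined really is the map induced by $\phi$ on associated graded algebras. Beyond this, the lemma is essentially the statement that the construction $(A, \theta_{A}) \mapsto \widehat{A}$ is functorial with respect to $\theta$-equivariant homomorphisms.
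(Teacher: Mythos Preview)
The paper states this lemma without proof, so there is no argument to compare against; your proposal correctly supplies the routine verification the authors omitted. Your unwinding of the construction $\widehat{A}_{n}=A_{0n}$ with multiplication $x\cdot y = x\circ\theta_{A}^{m}(y)$ matches the recipe of \cite[Lemma 2.4]{quadrics}, and the inductive upgrade of the commuting square to $\phi(m)\circ\theta_{A}^{m}=\theta_{B}^{m}\circ\phi$ is exactly what is needed to push $\widehat{\phi}$ through the multiplication.
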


We conclude the section by showing that the canonical map in Proposition \ref{prop.canonmap} agrees with the usual one in the commutative case.
\begin{example} \label{example.commagree}
Let $X$ be a noetherian $k$-scheme, let $\mathcal{L}$ denote an invertible sheaf over $X$ and let $\underline{\mathcal{L}}$ denote the sequence $(\mathcal{L}^{\otimes i})_{i \in \mathbb{Z}}$.  Let $F:{\sf coh }X \longrightarrow {\sf coh }X$ denote the equivalence $-\otimes_{\mathcal{O}_{X}}\mathcal{L}^{-1}$.  Then $B_{F(\underline{\mathcal{L}})} \cong B_{\underline{\mathcal{L}}}(1)$ and $\mathbb{S}^{nc}(F(\underline{\mathcal{L}})) \cong \mathbb{S}^{nc}(\underline{\mathcal{L}})(1)$ by Lemma \ref{lemma.functorringmap}(2) so that
$B_{\underline{\mathcal{L}}}$ and $\mathbb{S}^{nc}(\underline{\mathcal{L}})$ are $1$-periodic.  Furthermore, the induced diagram
$$
\begin{CD}
\mathbb{S}^{nc}(\underline{\mathcal{L}}) & \longrightarrow & B_{\underline{\mathcal{L}}} \\
@VVV @VVV \\
\mathbb{S}^{nc}(\underline{\mathcal{L}})(1) & \longrightarrow & B_{\underline{\mathcal{L}}}(1)
\end{CD}
$$
with horizontal maps from Proposition \ref{prop.canonmap}, commutes by Lemma \ref{lemma.functorringmap}.  Therefore, by Lemma \ref{lemma.oneperiod}, the induced map of $\mathbb{Z}$-graded algebras,
\begin{equation} \label{eqn.hateqn}
\widehat{\mathbb{S}^{nc}(\underline{\mathcal{L}})} \longrightarrow \widehat{B_{\underline{\mathcal{L}}}}
\end{equation}
is a homomorphism.  Finally, it is straightforward to check that $\widehat{\mathbb{S}^{nc}(\underline{\mathcal{L}})} \cong \mathbb{S}(\Gamma(X,\mathcal{L}))$ and $\widehat{B_{\underline{\mathcal{L}}}} \cong \Gamma_{*}(X,\mathcal{L})$, and that under these identifications, the homomorphism (\ref{eqn.hateqn}) induces the second composite of (\ref{eqn.comcomp}).
\end{example}

\section{Helices}
Throughout this section, ${\sf C}$ will denote a $k$-linear abelian category and $\underline{\mathcal{L}} := (\mathcal{L}_{i})_{i \in \mathbb{Z}}$ will denote a sequence of objects in ${\sf C}$.  The purpose of this section is to relate $\mathbb{S}^{nc}(\underline{\mathcal{L}})$ to the noncommutative symmetric algebra of a bimodule.  To this end, we recall that if $\mathcal{L}$ and $\mathcal{M}$ are objects of ${\sf C}$ and $\operatorname{End }\mathcal{L} = :D$, then one can define an object ${}^{*}\operatorname{Hom}(\mathcal{M}, \mathcal{L}) \otimes_{D} \mathcal{L}$ of ${\sf C}$ as in \cite[Section B3]{az2}.  In particular, if $D$ is a noetherian $k$-algebra and $\operatorname{Hom}(\mathcal{M}, \mathcal{L})$ is a finitely generated projective left $D$-module, then there is a canonical map
$$
\eta_{\mathcal{M}}: \mathcal{M} \rightarrow {}^{*}\operatorname{Hom}(\mathcal{M}, \mathcal{L}) \otimes_{D} \mathcal{L}.
$$

\begin{definition} \label{defn.helix}
We say the sequence $\underline{\mathcal{L}}$ is a {\it helix} if, for all $i \in \mathbb{Z}$,
\begin{enumerate}
\item{} $\operatorname{End }\mathcal{L}_{i}= :D_{i}$ is a division ring with $k$ in its center,

\item{} $\operatorname{Hom}(\mathcal{L}_{i}, \mathcal{L}_{i+1})$ is finite-dimensional as both a left $D_{i+1}$-module and a right $D_{i}$-module.

\item{} There is a short exact sequence, which we call the {\it $i$th Euler sequence},
$$
0 \rightarrow \mathcal{L}_{i} \overset{\eta_{\mathcal{L}_{i}}}{\rightarrow} {}^{*}\operatorname{Hom}(\mathcal{L}_{i}, \mathcal{L}_{i+1}) \otimes_{D_{i+1}} \mathcal{L}_{i+1} \rightarrow \mathcal{L}_{i+2} \rightarrow 0.
$$

\item{} The canonical one-to-one $k$-algebra map $\Phi_{i}:D_{i} \rightarrow D_{i+2}$ (see \cite[Lemma 4.5]{abstractp1}) is an isomorphism.

\item{} If, for each $i$, we endow $\operatorname{Hom}(\el_{i+1},\el_{i+2})$ with the $D_{i} - D_{i+1}$-bimodule structure from (4), the canonical map ${}^{*}\operatorname{Hom}(\el_{i},\el_{i+1}) \longrightarrow \operatorname{Hom}(\el_{i+1},\el_{i+2})$ constructed by applying $\operatorname{Hom}(\mathcal{L}_{i+1},-)$ to the $i$th Euler sequence is an isomorphism of bimodules.
\end{enumerate}
\end{definition}

\begin{example} \label{example.helixcomm}
Let $X$ be a projective variety over $k$ with structure sheaf $\mathcal{O}$, and suppose $\mathcal{L}$ is line bundle on $X$ such that $\mathcal{L}$ is generated by two global sections, and $h^{0}(\mathcal{L}^{-1})=0$.  We prove that if $\mathcal{L}_{i}:=\mathcal{L}^{\otimes i}$, then $\underline{\mathcal{L}}:=\{\mathcal{L}_{i}\}$ is a helix.

To prove this, we note that conditions (1), (2) and (4) are clear.  To prove (3), we must show exactness of
\begin{equation} \label{equation.ses}
0 \rightarrow \mathcal{L}_{i} \overset{\eta_{\mathcal{L}_{i}}}{\rightarrow} {}^{*}\operatorname{Hom}(\mathcal{L}_{i}, \mathcal{L}_{i+1}) \otimes_{k} \mathcal{L}_{i+1} \rightarrow \mathcal{L}_{i+2} \rightarrow 0.
\end{equation}
We will construct the sequence when $i=0$, and obtain the others by tensoring by the approriate power of $\mathcal{L}$.  We let $\{-s_{0}, s_{1}\}$ be a linearly independent set of global sections of $\mathcal{L}$ which generate $\mathcal{L}$.  We let $s: \mathcal{L}^{\oplus 2} \rightarrow \mathcal{L}^{\otimes 2}$ denote the tensor product of the epimorphism $(s_{1},-s_{0}): \mathcal{O} \oplus \mathcal{O} \rightarrow \mathcal{L}$ with $\mathcal{L}$.  It is then routine to check that, since $X$ is a variety, the sequence
$$
0 \rightarrow \mathcal{O} \overset{(s_{0},s_{1})}{\longrightarrow} \mathcal{L} \oplus \mathcal{L} \overset{s}{\longrightarrow} \mathcal{L}^{\otimes 2} \rightarrow 0
$$
is exact, whence (3) follows.

Finally, to prove condition (5), we apply $\operatorname{Hom}(\mathcal{L}_{i+1}, -)$ to (\ref{equation.ses}) to obtain an exact sequence
$$
0 \rightarrow \operatorname{Hom}(\mathcal{L}_{i+1},\mathcal{L}_{i}) \rightarrow {}^{*}\operatorname{Hom}(\mathcal{L}_{i}, \mathcal{L}_{i+1}) \rightarrow \operatorname{Hom}(\mathcal{L}_{i+1},\mathcal{L}_{i+2}).
$$
The left term is $0$ by hypothesis.  Thus, the right map is an inclusion.  Since the dimension of the middle term equals that of the right term, condition (5) holds.
\end{example}

\begin{example} \label{example.p1}
Suppose ${\sf C}={\sf coh }\mathbb{P}^{1}$.  We write $\mathcal{O}$ for $\mathcal{O}_{\mathbb{P}^{1}}$.  By Example \ref{example.helixcomm}, the canonical maps
$$
\mathcal{O} \rightarrow {}^{*}\operatorname{Hom}(\mathcal{O},\mathcal{O}(1)) \otimes \mathcal{O}(1),
$$
and
$$
\operatorname{Hom}(\mathcal{O}, \mathcal{O}(1)) \otimes \mathcal{O} \rightarrow \mathcal{O}(1)
$$
extend uniquely to the right and left to give a helix, that is, if we let $\mathcal{L}_{0} := \mathcal{O}$ and $\mathcal{L}_{1} := \mathcal{O}(1)$, then we can let $\mathcal{L}_{2}$ be the cokernel of the first map and $\mathcal{L}_{-1}$ be the kernel of the second map, etc., and we obtain a helix (whose associated short exact sequences consist of all twists of the Euler sequence).

We now show, in contrast, that for $n \geq 2$, the canonical maps
\begin{equation} \label{eqn.canonn}
\mathcal{O} \rightarrow {}^{*}\operatorname{Hom}(\mathcal{O},\mathcal{O}(n)) \otimes_{k} \mathcal{O}(n),
\end{equation}
and
\begin{equation} \label{eqn.cocanonn}
\operatorname{Hom}(\mathcal{O}, \mathcal{O}(n)) \otimes \mathcal{O} \rightarrow \mathcal{O}(n)
\end{equation}
do not extend to a helix.

To prove this, suppose the cokernel of (\ref{eqn.canonn}) had the form $\sum_{i=1}^{n} \mathcal{O}(a_{i}) \oplus \mathcal{T}$ where $\mathcal{T}$ is torsion.  Twisting the resulting short exact sequence by $-n$ and computing global sections yields:
\begin{equation} \label{eqn.minusn}
\sum_{i=1}^{n}h^{0}(\mathcal{O}(a_{i}-n))+h^{0}(\mathcal{T})=2n.
\end{equation}
Repeating this for $-(n+1)$ yields
\begin{equation} \label{eqn.minusn1}
\sum_{i=1}^{n}h^{0}(\mathcal{O}(a_{i}-(n+1)))+h^{0}(\mathcal{T})=n.
\end{equation}
Finally, if the maps (\ref{eqn.canonn}) and (\ref{eqn.cocanonn}) induced a helix, we would also have $$
{}^{*}\operatorname{Hom}(\mathcal{O}, \mathcal{O}(n)) \cong \operatorname{Hom}(\mathcal{O}(n), \sum_{i=1}^{n}\mathcal{O}(a_{i})\oplus \mathcal{T}),
$$
which would imply that
\begin{equation} \label{eqn.helix}
\sum_{i=1}^{n}h^{0}(\mathcal{O}(a_{i}-n))+h^{0}(\mathcal{T})=n+1.
\end{equation}
Now, subtracting (\ref{eqn.minusn1}) from (\ref{eqn.helix}) implies that
$$
1= \sum_{i=1}^{n} h^{0}(\mathcal{O}(a_{i}-n))-h^{0}(\mathcal{O}(a_{i}-(n+1)))
$$
which implies that $a_{i} \geq n$ for exactly one $i$.  Then (\ref{eqn.helix}) implies that $h^{0}(\mathcal{T})=n$.  It thus follows that (\ref{eqn.minusn}) is impossible since $n>1$ and the result follows.
\end{example}

\begin{remark} \label{remark.comp}
A helix in the sense of \cite{abstractp1} is a helix in the sense of this paper.  For, the conditions (1), (3), (4) and (5) of helix from \cite{abstractp1} implies conditions (1), (2), (3) and (4) of the notion of helix studied here.  Condition (2) of a helix from \cite{abstractp1} implies, by \cite[Proposition 5.1]{abstractp1}, that condition (5) of the current notion of helix holds.

On the other hand, if $X$ is a smooth elliptic curve over a field and $\mathcal{L}$ is a degree two line bundle, then  $\{\mathcal{L}^{\otimes i}\}$ is a helix over ${\sf coh }X$ by Example \ref{example.helixcomm}, but doesn't satisfy condition (2) of helix from \cite{abstractp1}.

 A notion of {\it geometric helix} was introduced in both \cite{moriueyama} and \cite{bondalp}, (the latter agreeing with \cite{rudakov}).  Although these notions differ in general (see \cite[Remark 3.17]{moriueyama}), they agree with each other and with the notion of helix from \cite{abstractp1} in certain circumstances (details will appear in another paper).  The example in the previous paragraph illustrates the fact that there exist helices in the sense of this paper which are not geometric helices in the sense of  \cite{moriueyama} or \cite{bondalp}.
\end{remark}

\begin{lemma} \label{lemma.functorringmap}
Suppose ${\sf C}$ and ${\sf D}$ are $k$-linear abelian categories, $F:{\sf C} \rightarrow {\sf D}$ is a $k$-linear equivalence and $\underline{\mathcal{L}}$ is a helix in ${\sf C}$.  Let $F(\underline{\mathcal{L}})$ denote the sequence $(F(\mathcal{L}_{i}))_{i \in \mathbb{Z}}$ in ${\sf D}$.  Then $F(\underline{\mathcal{L}})$ is a helix in ${\sf D}$.
\end{lemma}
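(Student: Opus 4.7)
The plan is to verify each of the five conditions of Definition \ref{defn.helix} for $F(\underline{\mathcal{L}})$, exploiting the fact that a $k$-linear equivalence induces $k$-algebra isomorphisms on all Hom spaces. Throughout, write $\mathcal{L}_i' := F(\mathcal{L}_i)$, $D_i' := \operatorname{End} \mathcal{L}_i'$, and note that $F$ furnishes $k$-algebra isomorphisms $F_i : D_i \xrightarrow{\sim} D_i'$ and $k$-vector space isomorphisms $F : \operatorname{Hom}(\mathcal{L}_i, \mathcal{L}_j) \xrightarrow{\sim} \operatorname{Hom}(\mathcal{L}_i', \mathcal{L}_j')$ compatible with composition.

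\textbf{Conditions (1), (2), and (4).} Since $F_i$ is a $k$-algebra isomorphism, $D_i'$ is a division ring with $k$ in its center, giving (1). The Hom-space isomorphism above is a bimodule isomorphism along $F_{i+1}$ and $F_i$, so finite-dimensionality on either side transfers, giving (2). For (4), one checks that the canonical map $\Phi_i' : D_i' \to D_{i+2}'$ for $F(\underline{\mathcal{L}})$ fits into a commutative diagram with $\Phi_i$ via the $F_i$, so $\Phi_i'$ is an isomorphism because $\Phi_i$ is.

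\textbf{Condition (3).} The key point is that $F$ commutes, up to canonical isomorphism, with the Artin--Zhang construction ${}^{*}\operatorname{Hom}(-,\mathcal{L}) \otimes_{D} \mathcal{L}$. Since $\operatorname{Hom}(\mathcal{L}_i, \mathcal{L}_{i+1})$ is finite-dimensional over the division ring $D_{i+1}$, it is free, say of rank $n$. A choice of $D_{i+1}$-basis yields an isomorphism ${}^{*}\operatorname{Hom}(\mathcal{L}_i, \mathcal{L}_{i+1}) \otimes_{D_{i+1}} \mathcal{L}_{i+1} \cong \mathcal{L}_{i+1}^{\oplus n}$ under which $\eta_{\mathcal{L}_i}$ becomes the map whose components are the chosen basis elements. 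The $k$-linear equivalence $F$ preserves finite direct sums and, because $F_{i+1}$ is a ring isomorphism, sends the basis to a $D_{i+1}'$-basis of $\operatorname{Hom}(\mathcal{L}_i', \mathcal{L}_{i+1}')$; thus $F$ applied to $\eta_{\mathcal{L}_i}$ becomes, under the analogous isomorphism, the map $\eta_{\mathcal{L}_i'}$. Since $F$ is exact, applying it to the $i$th Euler sequence in $\sf{C}$ produces the $i$th Euler sequence for $F(\underline{\mathcal{L}})$ in $\sf{D}$.

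\textbf{Condition (5).} The canonical map in (5) is defined by applying $\operatorname{Hom}(\mathcal{L}_{i+1}, -)$ to the $i$th Euler sequence and identifying the resulting term with ${}^{*}\operatorname{Hom}(\mathcal{L}_i, \mathcal{L}_{i+1})$ via the evaluation pairing. All three ingredients — the Euler sequence, the Hom-functor, and the evaluation pairing — are preserved by $F$ up to the canonical isomorphisms already used in the proof of (3). Hence $F$ carries the canonical map for $\underline{\mathcal{L}}$ to that for $F(\underline{\mathcal{L}})$, so the latter is an isomorphism because the former is. The main (mild) obstacle is the bookkeeping in Step 2: verifying that the canonical isomorphism $F({}^{*}V \otimes_{D_{i+1}} \mathcal{L}_{i+1}) \cong {}^{*}V' \otimes_{D_{i+1}'} \mathcal{L}_{i+1}'$ is natural in $V$ and carries $F(\eta_{\mathcal{L}_i})$ to $\eta_{\mathcal{L}_i'}$; this is a direct check using that $V$ is a free $D_{i+1}$-module and that $F$ is additive and identifies endomorphism rings.
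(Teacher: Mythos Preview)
Your proof is correct and follows essentially the same approach as the paper's, supplying more detail where the paper simply asserts that conditions (1)--(3) are ``easy to check.'' The only minor difference is in condition (5): the paper phrases the argument in terms of the vanishing of the connecting homomorphism in the long exact sequence obtained by applying $\operatorname{Hom}(\mathcal{L}_{i+1},-)$ to the $i$th Euler sequence, whereas you argue directly by transport of structure that the canonical map for $F(\underline{\mathcal{L}})$ is $F$ applied to the one for $\underline{\mathcal{L}}$.
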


\begin{proof}
Properties (1), (2) and (3) in the definition of helix are easy to check for  $F(\underline{\mathcal{L}})$.  For property (4), we note that the canonical map $\operatorname{End}(F(\mathcal{L}_{i})) \rightarrow \operatorname{End}(F(\mathcal{L}_{i+2}))$ sends $F(\psi) \in \operatorname{End}(F(\mathcal{L}_{i}))$ to $F(\Phi_{i}(\psi))$, and so is an isomorphism.  Finally, we note that in applying $(\mathcal{L}_{i+1},-)$ to the $i$th Euler sequence, property (5) of a helix for $\underline{\mathcal{L}}$ implies that the corresponding first connecting homomorphism is zero.  Therefore, the first connecting homomorphism corresponding to the application of $(F(\mathcal{L}_{i+1}),-)$ to the $i$th Euler exact sequence for $F(\underline{\mathcal{L}})$ must be zero as well, whence the result.
\end{proof}

\begin{prop} \label{prop.ncsym}
Suppose $\underline{\mathcal{L}}$ is a helix.  Then, for all $i \in \mathbb{Z}$,
\begin{enumerate}
\item{} the $\mathbb{Z}$-algebra $\mathbb{S}^{nc}(\operatorname{Hom}(\mathcal{L}_{i}, \mathcal{L}_{i+1}))$ exists, and

\item{} there is an isomorphism of $\mathbb{Z}$-algebras
$$
\mathbb{S}^{nc}(\operatorname{Hom}(\mathcal{L}_{-1}, \mathcal{L}_{0})) \longrightarrow \mathbb{S}^{nc}(\underline{\mathcal{L}}).
$$
\end{enumerate}
\end{prop}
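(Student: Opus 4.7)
The plan is to reduce both parts to matching generators and relations, using the Euler sequences of the helix to identify the kernel of each composition map with the image of a canonical coevaluation.

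For part (1), I will verify that M.\ Van den Bergh's construction of the noncommutative symmetric algebra applies to each bimodule $E_i := \operatorname{Hom}(\mathcal{L}_i, \mathcal{L}_{i+1})$. By helix conditions (1) and (2), $E_i$ is a finite-dimensional bimodule over the division rings $D_{i+1}$ (on the left) and $D_i$ (on the right), so it is automatically free on each side, and its iterated left and right duals exist and are finite-dimensional. Helix condition (5) identifies ${}^*E_i$ with $E_{i+1}$ as $D_i$-$D_{i+1}$-bimodules; taking right duals of this isomorphism and invoking the canonical $({}^*M)^* \cong M$ for finite-dimensional bimodules over division rings gives $E_i^* \cong E_{i-1}$. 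Iterating, every left or right iterated dual of $E_i$ is again of the form $E_j$, which is precisely the data needed to form the bimodule sequence underlying $\mathbb{S}^{nc}(E_i)$.

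For part (2), I will use the universal property of $\mathbb{S}^{nc}(E_{-1})$. The degree-one component of $\mathbb{S}^{nc}(E_{-1})$ at position $(-(j+1), -j)$ is, by iteration of condition (5), canonically identified with $E_j$, matching the generators of $\mathbb{S}^{nc}(\underline{\mathcal{L}})$. Van den Bergh's degree-two relations at position $(-(j+2), -j)$ are the image of the coevaluation map $D_j \to E_{j+1} \otimes_{D_{j+1}} E_j$ associated to the adjunction expressed by $E_{j+1} \cong {}^*E_j$, whereas the degree-two relations in $\mathbb{S}^{nc}(\underline{\mathcal{L}})$ are by definition the kernel of the composition map $E_{j+1} \otimes_{D_{j+1}} E_j \to \operatorname{Hom}(\mathcal{L}_j, \mathcal{L}_{j+2})$. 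To show these coincide, I will apply $\operatorname{Hom}(\mathcal{L}_j, -)$ to the $j$th Euler sequence. Using the canonical isomorphism $\operatorname{Hom}(\mathcal{L}_j, {}^*E_j \otimes_{D_{j+1}} \mathcal{L}_{j+1}) \cong {}^*E_j \otimes_{D_{j+1}} E_j$ together with condition (5), the left-exact sequence becomes
$$0 \longrightarrow D_j \longrightarrow E_{j+1} \otimes_{D_{j+1}} E_j \longrightarrow \operatorname{Hom}(\mathcal{L}_j, \mathcal{L}_{j+2}),$$
whose left map is $\operatorname{Hom}(\mathcal{L}_j, \eta_{\mathcal{L}_j})$. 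By the universality of $\eta_{\mathcal{L}_j}$ as an adjunction unit, this left map agrees with the coevaluation, so the kernel of composition equals the image of the coevaluation.

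Since the two $\mathbb{Z}$-algebras agree on generators and degree-two relations, and higher-degree relations in each are generated by the degree-two relations via tensoring on either side, the map $\mathbb{S}^{nc}(E_{-1}) \to \mathbb{S}^{nc}(\underline{\mathcal{L}})$ obtained from the universal property is an isomorphism. I expect the main obstacle to be the verification that the map $E_{j+1} \otimes_{D_{j+1}} E_j \to \operatorname{Hom}(\mathcal{L}_j, \mathcal{L}_{j+2})$ induced by $\operatorname{Hom}(\mathcal{L}_j, -)$ of the Euler sequence, once the identification ${}^*E_j \cong E_{j+1}$ from condition (5) is inserted, genuinely agrees with the composition map in $B_{\underline{\mathcal{L}}}$; tracing this down requires carefully unwinding the definition of $\eta_{\mathcal{L}_j}$ as an adjunction unit and using the triangle identities to match the resulting pairing with the composition in the orbit algebra.
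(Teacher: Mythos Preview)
Your proposal is correct and follows essentially the same route as the paper. For part~(1) both arguments check that the iterated left and right duals of $E_i:=\operatorname{Hom}(\mathcal{L}_i,\mathcal{L}_{i+1})$ exist and remain finite-dimensional bimodules (the paper phrases this as ``admissibility'' and cites \cite[Proposition~5.3]{abstractp1}, while you unpack the computation directly from condition~(5)); for part~(2) both identify the generators via the isomorphisms ${}^*E_i\cong E_{i+1}$ and then match the degree-two relations by applying $\operatorname{Hom}(\mathcal{L}_j,-)$ to the $j$th Euler sequence. The only substantive difference is packaging: the paper offloads the verification that the coevaluation image equals the composition kernel to the proof of \cite[Lemma~5.2]{abstractp1}, whereas you spell out that $\operatorname{Hom}(\mathcal{L}_j,\eta_{\mathcal{L}_j})$ is the adjunction coevaluation and hence its image is Van den Bergh's relation space. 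Your anticipated obstacle (that the map out of $E_{j+1}\otimes_{D_{j+1}}E_j$ induced by the Euler sequence really is the composition map in $B_{\underline{\mathcal{L}}}$) is real but routine; the paper does not linger on it either.
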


\begin{proof}
To prove the first result, it suffices to show the $D_{i+1}-D_{i}$-bimodule $\operatorname{Hom}(\mathcal{L}_{i}, \mathcal{L}_{i+1})$ is admissible (see \cite{abstractp1} for a definition). The fact that $\underline{\mathcal{L}}$ is a helix implies that the proof of \cite[Proposition 5.3]{abstractp1} holds, establishing the result.

To prove the second result, we note that, by property (5) of helix, there is a canonical isomorphism from the free $\mathbb{Z}$-algebra generated by the sequence of bimodules $\{\operatorname{Hom}(\mathcal{L}_{-1}, \mathcal{L}_{0})^{i*}\}_{i \in \mathbb{Z}}$ to the free $\mathbb{Z}$-algebra generated by the sequence of bimodules $\{\operatorname{Hom}(\mathcal{L}_{-i-1}, \mathcal{L}_{-i})\}_{i \in \mathbb{Z}}$.  By the proof of \cite[Lemma 5.2]{abstractp1}, this isomorphism descends to a homomorphism
$$
\mathbb{S}^{nc}(\operatorname{Hom}(\mathcal{L}_{-1}, \mathcal{L}_{0})) \longrightarrow \mathbb{S}^{nc}(\underline{\mathcal{L}})
$$
which is an isomorphism in degrees zero and one.  Furthermore, by applying $(\mathcal{L}_{i},-)$ to the $i$th Euler sequence, we see that the isomorphism of free $\mathbb{Z}$-algebras maps the relations in degree two in $\mathbb{S}^{nc}(\operatorname{Hom}(\mathcal{L}_{-1}, \mathcal{L}_{0}))$ onto the relations in degree two in $\mathbb{S}^{nc}(\underline{\mathcal{L}})$.  The result follows from this.
\end{proof}

Combining Proposition \ref{prop.canonmap} and Proposition \ref{prop.ncsym}, we obtain the following
\begin{cor} \label{cor.main}
Suppose $\underline{\mathcal{L}}$ is a helix.  Then there is a morphism of $\mathbb{Z}$-algebras
$$
\mathbb{S}^{nc}(\operatorname{Hom}(\mathcal{L}_{-1}, \mathcal{L}_{0})) \longrightarrow B_{\underline{\mathcal{L}}}
$$
which is an isomorphism in degrees zero and one.
\end{cor}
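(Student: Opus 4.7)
The plan is to obtain the desired morphism simply by composing the two maps already constructed in Proposition \ref{prop.canonmap} and Proposition \ref{prop.ncsym}. Since $\underline{\mathcal{L}}$ is a helix, Proposition \ref{prop.ncsym}(2) produces an isomorphism of $\mathbb{Z}$-algebras
$$
\mathbb{S}^{nc}(\operatorname{Hom}(\mathcal{L}_{-1}, \mathcal{L}_{0})) \overset{\sim}{\longrightarrow} \mathbb{S}^{nc}(\underline{\mathcal{L}}),
$$
and Proposition \ref{prop.canonmap} provides a canonical homomorphism
$$
\mathbb{S}^{nc}(\underline{\mathcal{L}}) \longrightarrow B_{\underline{\mathcal{L}}}
$$
which is an isomorphism in degrees zero and one. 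I would take the composition of these two maps as the required morphism.

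To verify that this composition is an isomorphism in degrees zero and one, I would simply observe that the first map is an isomorphism in every degree, so the degree-zero and degree-one components of the composition coincide (up to the given isomorphism) with those of the second map, which are isomorphisms by Proposition \ref{prop.canonmap}. There is no real obstacle here; the entire content of the corollary is the assembly of the two previous propositions, and no additional use of the helix axioms is required beyond what has already been invoked to state Proposition \ref{prop.ncsym}.
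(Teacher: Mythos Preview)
Your proposal is correct and matches the paper's approach exactly: the corollary is stated in the paper as an immediate consequence of combining Proposition~\ref{prop.canonmap} and Proposition~\ref{prop.ncsym}, with no further argument given. Composing the isomorphism of Proposition~\ref{prop.ncsym}(2) with the canonical map of Proposition~\ref{prop.canonmap} is precisely what is intended.
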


\section{The main theorem}
In this section, we prove our main result (Theorem \ref{thm.makemap}), which gives hypotheses under which the map of $\mathbb{Z}$-algebras from Corollary \ref{cor.main} induces a map between spaces.

\begin{prop} \label{prop.extzero}
Suppose $\underline{\mathcal{L}}$ is a helix over ${\sf C}$ such that there exists an $n \geq 0$ with the property that for all $l \geq n$, $\operatorname{Ext}^{1}(\mathcal{L}_{i}, \mathcal{L}_{i+l})=0$ for all $i \in \mathbb{Z}$.  Then, for all $i \in \mathbb{Z}$, the right $B:=B_{\underline{\mathcal{L}}}$-module $e_{i}B_{\geq i+1}$ is generated by $B_{i,i+1}, \ldots, B_{i, i+n+1}$.
\end{prop}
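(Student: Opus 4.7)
The plan is to reduce the proposition to showing that for each $j \geq i+n+2$ the composition map
$$
B_{i,i+1} \otimes_{D_{-i-1}} B_{i+1,j} \longrightarrow B_{i,j}
$$
is surjective; combined with the trivial observation that for $i+1 \leq j \leq i+n+1$ the component $B_{i,j}$ lies in the right $B$-submodule it itself generates (via the identity in $B_{j,j}$), this suffices. In fact, what I would prove for $j \geq i+n+2$ is slightly stronger, namely that every morphism $\mathcal{L}_{-j} \to \mathcal{L}_{-i}$ factors through $\mathcal{L}_{-i-1}$, so that just the single component $B_{i,i+1}$ already generates $B_{i,j}$ in that range.

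To prove this factorization, I would apply $\operatorname{Hom}(\mathcal{L}_{-j},-)$ to the $(-i-2)$th Euler sequence
$$
0 \longrightarrow \mathcal{L}_{-i-2} \longrightarrow {}^{*}\operatorname{Hom}(\mathcal{L}_{-i-2},\mathcal{L}_{-i-1}) \otimes_{D_{-i-1}} \mathcal{L}_{-i-1} \longrightarrow \mathcal{L}_{-i} \longrightarrow 0.
$$
The resulting long exact sequence contains the term $\operatorname{Ext}^{1}(\mathcal{L}_{-j}, \mathcal{L}_{-i-2})$; setting $l := j-i-2$, for $j \geq i+n+2$ we have $l \geq n$, so the vanishing hypothesis forces this $\operatorname{Ext}^{1}$ to be zero. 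This yields a surjection from $\operatorname{Hom}\bigl(\mathcal{L}_{-j},\, {}^{*}\operatorname{Hom}(\mathcal{L}_{-i-2},\mathcal{L}_{-i-1}) \otimes_{D_{-i-1}} \mathcal{L}_{-i-1}\bigr)$ onto $\operatorname{Hom}(\mathcal{L}_{-j}, \mathcal{L}_{-i})$.

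It remains to identify this surjection with the composition map. By helix condition (2), $V := {}^{*}\operatorname{Hom}(\mathcal{L}_{-i-2}, \mathcal{L}_{-i-1})$ is finite-dimensional, hence free, as a left $D_{-i-1}$-module, so the source canonically rewrites as $V \otimes_{D_{-i-1}} \operatorname{Hom}(\mathcal{L}_{-j}, \mathcal{L}_{-i-1})$. Helix condition (5) identifies $V$ as a bimodule with $\operatorname{Hom}(\mathcal{L}_{-i-1}, \mathcal{L}_{-i}) = B_{i,i+1}$, and by the very construction of that isomorphism the Euler map $V \otimes \mathcal{L}_{-i-1} \to \mathcal{L}_{-i}$ becomes the usual evaluation $\operatorname{Hom}(\mathcal{L}_{-i-1}, \mathcal{L}_{-i}) \otimes_{D_{-i-1}} \mathcal{L}_{-i-1} \to \mathcal{L}_{-i}$. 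Under these identifications the displayed surjection becomes precisely the composition $B_{i,i+1} \otimes_{D_{-i-1}} B_{i+1,j} \to B_{i,j}$, completing the argument.

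The main bookkeeping obstacle is this last step: justifying the Hom-tensor adjunction (available here because $V$ is finitely generated projective over a division ring) and verifying that condition (5) of helix really does convert the Euler map into plain composition. Neither is a substantive difficulty, merely an unpacking of definitions, so I expect the written proof to be short.
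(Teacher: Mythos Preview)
Your proposal is correct and follows essentially the same route as the paper: apply $\operatorname{Hom}(\mathcal{L}_{-j},-)$ to the $(-i-2)$th Euler sequence, use the $\operatorname{Ext}^1$-vanishing hypothesis to obtain surjectivity, and invoke property~(5) to identify the resulting map with the multiplication $B_{i,i+1}\otimes B_{i+1,j}\to B_{i,j}$. The paper's proof is the same argument written in different index variables (their $j,l$ correspond to your $-j,\,j-i-2$), and it simply compresses your last paragraph into the phrase ``using property~(5) of a helix''; your more careful unpacking of the Hom--tensor identification is fine (one tiny slip: $V$ is a \emph{right} $D_{-i-1}$-module in the tensor $V\otimes_{D_{-i-1}}\mathcal{L}_{-i-1}$, not a left one, but this does not affect the argument).
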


\begin{proof}
Suppose $l \geq n$.  Applying $\operatorname{Hom}(\mathcal{L}_{j}, -)$ to the $j+l$th Euler sequence and using property (5) of a helix, we deduce that multiplication
$$
B_{-(j+l+2), -(j+l+1)} \otimes B_{-(j+l+1), -j} \rightarrow B_{-(j+l+2), -j}
$$
is surjective.  Setting $i=-(j+l+2)$, we find that multiplication
$$
B_{i,i+1} \otimes B_{i+1, i+l+2} \rightarrow B_{i, i+l+2}
$$
is surjective for $l \geq n$.  The result follows.
\end{proof}

We now recall some terminology from \cite[Section 2]{quadrics}.  We say that a $\mathbb{Z}$-algebra $A$ is {\it positively graded} if $A_{ij}=0$ for all $i>j$.  In what follows, we will abuse terminology by saying `$A$ is a $\mathbb{Z}$-algebra' if $A$ is a positively graded $\mathbb{Z}$-algebra.  We say a $\mathbb{Z}$-algebra $A$ is {\it connected} if for all $i$, $A_{ii}$ is a division ring over $k$ and $A_{i,i+1}$ is finite-dimensional over both $A_{ii}$ and $A_{i+1,i+1}$.

Suppose $A$ is a $\mathbb{Z}$-algebra.  A graded right $A$-module $M$ is {\it right bounded} if $M_{n}=0$ for all $n>>0$.  We let ${\sf Tors }A$ denote the full subcategory of ${\sf Gr }A$ consisting of modules whose elements $m$ have the property that the right $A$-module generated by $m$ is right bounded.  It follows from \cite[Lemma 3.5]{morinyman} that if $A$ is connected and $e_{j}A_{\geq j+1}$ is finitely generated for all $j \in \mathbb{Z}$, then ${\sf Tors }A$ is a localizing subcategory of ${\sf Gr }A$.  Furthermore, since ${\sf Gr }A$ has enough injectives,  if $\pi:{\sf Gr }A \rightarrow {\sf Gr }A/{\sf Tors }A = : {\sf Proj }A$ is the quotient functor, then there exists a right adjoint to $\pi$.

\begin{theorem} \label{thm.makemap}
Suppose that $\underline{\mathcal{L}}$ is a helix over ${\sf C}$ such that
\begin{itemize}
\item{} $\operatorname{Hom}(\mathcal{L}_{j}, \mathcal{L}_{i})=0$ for all $i, j \in \mathbb{Z}$ with $j>i$,

\item{} there exists an $n \geq 0$ with the property that for all $l \geq n$, $\operatorname{Ext}^{1}(\mathcal{L}_{i}, \mathcal{L}_{i+l})=0$ for all $i \in \mathbb{Z}$, and

\item{} for all $i \in \mathbb{Z}$, $B_{i,j}$ is finite-dimensional over $B_{jj}$ for $i+1 \leq j \leq i+n+1$.
\end{itemize}

Then
\begin{enumerate}
\item{} $B_{\underline{\mathcal{L}}}$ is connected,

\item{} the subcategory ${\sf Tors }B_{\underline{\mathcal{L}}}$ is a localizing subcategory of ${\sf Gr }B_{\underline{\mathcal{L}}}$,

\item{} the restriction functor ${\sf Gr }B_{\underline{\mathcal{L}}} \rightarrow {\sf Gr }\mathbb{S}^{nc}(\operatorname{Hom }(\mathcal{L}_{-1}, \mathcal{L}_{0}))$ induced by the map from Corollary \ref{cor.main} induces a functor
\begin{equation} \label{eqn.geomap}
{\sf Proj }B_{\underline{\mathcal{L}}} \rightarrow {\sf Proj }\mathbb{S}^{nc}(\operatorname{Hom }(\mathcal{L}_{-1}, \mathcal{L}_{0})).
\end{equation}

\end{enumerate}
\end{theorem}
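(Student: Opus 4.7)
My plan is to treat the three conclusions in order, each reducing to a general $\mathbb{Z}$-algebra fact whose hypotheses are already assembled from the assumptions on $\underline{\mathcal{L}}$.

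For (1), I would unpack the definition of \emph{connected} given just before the theorem: positive grading, division-ring components on the diagonal, and finite-dimensional degree-one pieces on both sides. Writing $B := B_{\underline{\mathcal{L}}}$ and using $B_{ij}=\operatorname{Hom}(\mathcal{L}_{-j},\mathcal{L}_{-i})$, positive grading is the first bulleted hypothesis (for $i>j$ one has $-j>-i$), the division-ring condition is helix property (1) applied to $\mathcal{L}_{-i}$, and the finite-dimensionality of $B_{i,i+1}=\operatorname{Hom}(\mathcal{L}_{-i-1},\mathcal{L}_{-i})$ over both $B_{ii}=D_{-i}$ and $B_{i+1,i+1}=D_{-i-1}$ is helix property (2). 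None of this is delicate.

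For (2), the plan is to invoke \cite[Lemma 3.5]{morinyman}, which asserts that ${\sf Tors}\,B$ is a localizing subcategory of ${\sf Gr}\,B$ once $B$ is connected and each $e_j B_{\geq j+1}$ is finitely generated as a right $B$-module. Connectedness is (1); finite generation is exactly what Proposition \ref{prop.extzero} and the third bulleted hypothesis deliver in combination. Indeed, Proposition \ref{prop.extzero}, whose assumptions consist of the helix property and the second bullet, shows that $e_j B_{\geq j+1}$ is generated by the subspaces $B_{j,j+1},\ldots,B_{j,j+n+1}$; each such subspace is finite-dimensional over the division ring $B_{j+k,j+k}$ by the third hypothesis, hence finitely generated as a right $B$-module, and with only finitely many such subspaces the whole is finitely generated.

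For (3), I would appeal to the universal property of Serre quotients, which demands two checks. First, ${\sf Proj}\,\mathbb{S}^{nc}(\operatorname{Hom}(\mathcal{L}_{-1},\mathcal{L}_0))$ must itself be defined. The algebra $\mathbb{S}^{nc}(\operatorname{Hom}(\mathcal{L}_{-1},\mathcal{L}_0))$ exists by Proposition \ref{prop.ncsym}(1); it is generated in degree one with division-ring degree-zero pieces and finite-dimensional degree-one pieces, so connectedness holds and $e_j\mathbb{S}^{nc}_{\geq j+1}$ is generated by the single finite-dimensional component $\mathbb{S}^{nc}_{j,j+1}$, whence \cite[Lemma 3.5]{morinyman} applies once more. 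Second, the exact restriction-of-scalars functor $\rho\colon{\sf Gr}\,B\to{\sf Gr}\,\mathbb{S}^{nc}$ associated to the map of Corollary \ref{cor.main} must send ${\sf Tors}\,B$ into ${\sf Tors}\,\mathbb{S}^{nc}$; this is immediate, since the $\mathbb{S}^{nc}$-submodule generated by an element is contained in the $B$-submodule it generates and therefore inherits right-boundedness. The composite $\pi_{\mathbb{S}^{nc}}\circ\rho$ then descends across $\pi_B$ to produce \eqref{eqn.geomap}. The one substantive bookkeeping step is the finite-generation argument in (2)---one must make sure that combining Proposition \ref{prop.extzero} with the third hypothesis genuinely produces a finite set of right-module generators---after which everything else is formal.
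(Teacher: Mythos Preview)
Your proposal is correct and follows essentially the same route as the paper: (1) is unpacked from the first hypothesis together with helix properties (1) and (2); (2) follows from \cite[Lemma 3.5]{morinyman} once Proposition~\ref{prop.extzero} plus the third hypothesis yield finite generation of each $e_jB_{\geq j+1}$; and (3) is obtained by checking that ${\sf Tors}\,\mathbb{S}^{nc}$ is localizing and that restriction preserves torsion. The only cosmetic difference is that for the descent step in (3) the paper cites \cite[Lemma 1.1]{smithcor} rather than invoking the universal property of Serre quotients directly.
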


\begin{proof}
The first assertion is immediate from the first hypothesis and properties (1) and (2) of a helix.

Next we note that, by the comment preceding the statement of the theorem, it suffices, by (1), to prove that $e_{i}B_{\geq i+1}$ is finitely generated for all $i \in \mathbb{Z}$.  By the second hypothesis, Proposition \ref{prop.extzero} implies that $e_{i}B_{\geq i+1}$ is generated by
$$
B_{i,i+1}, \ldots, B_{i, i+n+1}
$$
and each of these is finite-dimensional on the right by the third hypothesis.

To prove (3), it is immediate that the restriction functor sends torsion modules to torsion modules.  Therefore, it suffices, \cite[Lemma 1.1]{smithcor}, to show that ${\sf Tors }\mathbb{S}^{nc}(\operatorname{Hom }(\mathcal{L}_{-1}, \mathcal{L}_{0}))$ is localizing.  This follows from the comment preceding the statement of the theorem, since $e_{j}\mathbb{S}^{nc}(\operatorname{Hom }(\mathcal{L}_{-1}, \mathcal{L}_{0}))_{\geq j+1}$ is generated by $\operatorname{dim}_{D_{-j-1}}\operatorname{Hom}(\mathcal{L}_{-j-1}, \mathcal{L}_{-j})$ elements, which is finite since $\underline{\mathcal{L}}$ is a helix.
\end{proof}

\begin{remark} \label{remark.extfinite}
We note that the third hypothesis in Theorem \ref{thm.makemap} is automatic in case ${\sf C}$ is Hom-finite, or in case $B$ is right Ext-finite (see \cite[Remark 3.3]{morinyman}).
\end{remark}

%The question of whether the functor from (\ref{eqn.geomap}) has a left-adjoint was studied in the $\mathbb{Z}$-algebra context in \cite[Theorem 1.2]{smithcor}.  Although we will not study this question in general, we will need the following

%\begin{lemma} \label{lemma.ladjoint}
%Assume the hypotheses in Theorem \ref{theorem.makemap} hold, and suppose the canonical map $\mathbb{S}^{nc}(\operatorname{Hom }(\mathcal{L}_{-1}, \mathcal{L}_{0})) \longrightarrow B_{\underline{\mathcal{L}}}$ from Corollary \ref{cor.main} makes $B_{\underline{\mathcal{L}}}e_{i}$ a finite direct sum of left $\mathbb{S}^{nc}(\operatorname{Hom }(\mathcal{L}_{-1}, \mathcal{L}_{0}))$-modules of the form $\mathbb{S}^{nc}(\operatorname{Hom }(\mathcal{L}_{-1}, \mathcal{L}_{0}))e_{j}$ for various $j$.
%\end{lemma}

%\begin{proof}

%\end{proof}

\begin{example} \label{example.helixcomm2}
Let $X$ be a projective variety over $k$ with structure sheaf $\mathcal{O}$, and suppose $\mathcal{L}$ is line bundle on $X$ such that $\mathcal{L}$ is generated by two global sections, and $h^{0}(\mathcal{L}^{-1})=0$.  By Example \ref{example.helixcomm}, we know that $(\mathcal{L}^{\otimes i})_{i \in \mathbb{Z}}$ is a helix.  We show that the other hypotheses of Theorem \ref{thm.makemap} hold as well.  The first hypothesis follows from the fact that $\Gamma(X,\mathcal{L}^{-1})=0$ and property (3) of a helix (see \cite[Lemma 4.1]{abstractp1}).  The second hypothesis follows from Serre vanishing on $X$, and the third hypothesis follows from Hom-finiteness on ${\sf coh }X$.
\end{example}

\section{An application: double covers of $\mathbb{P}^{1}_{n}$}
The goal of this section is to prove Theorem \ref{thm.main}.  Let $X$ denote a smooth elliptic curve over $k= \mathbb{C}$.  We begin by recalling terminology related to the notion of a simple pair of coherent sheaves over $X$ from \cite[Definition 1]{rudakov2}.  A coherent sheaf, $\mathcal{E}$, over $X$ is {\em simple} if $\operatorname{End }(\mathcal{E})=k$.  A pair of simple coherent sheaves, $\cL_0, \cL_1$, is called a {\em simple pair}, denoted $(\cL_0, \cL_1)$, if $\operatorname{Ext}^{i}(\cL_0, \cL_1) \neq 0$ for only one value of $i$.  A simple pair $(\cL_0,\cL_1)$ is an {\em injective pair} if $\operatorname{Ext}^1(\cL_0,\cL_1) = 0$ and every non-zero morphism $f \colon \cL_0 \arr \cL_1$ is injective.  A simple (resp. injective) pair $(\cL_0,\cL_1)$ such that $\cL_0$ and $\cL_1$ are vector bundles is called a {\em simple pair of bundles} (resp. {\em injective pair of bundles}).

\begin{lemma}  \label{lem:EllipticRightMutate}
Let $(\cL_0,\cL_1)$ be an injective pair of bundles. Suppose that $n:= \dim_{k} \Hom (\cL_0,\cL_1) > 1$. Then the canonical short exact sequence
$$ 0 \arr \cL_0 \xrightarrow{\iota} \operatorname{Hom}(\cL_0,\cL_1)^* \otimes \cL_1 \xrightarrow{\pi} \cL_2 \arr 0 $$
defines an injective pair of bundles $(\cL_1,\cL_2)$.
\end{lemma}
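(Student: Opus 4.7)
For exactness, fix a basis $\phi_1,\ldots,\phi_n$ of $V := \Hom(\cL_0,\cL_1)$. Under the induced trivialization $V^*\otimes\cL_1\cong\cL_1^{\oplus n}$, the canonical map $\iota$ becomes $(\phi_1,\ldots,\phi_n)$, and each $\phi_i$ is injective by the injective pair hypothesis on $(\cL_0,\cL_1)$, hence $\iota$ itself is injective. Next, to obtain the required vanishing and nonvanishing cohomology, I would apply $\Hom(\cL_1,-)$ to the short exact sequence and exploit Serre duality on the elliptic curve $X$ (canonical bundle trivial, so $\operatorname{Ext}^i(E,F)\cong\operatorname{Ext}^{1-i}(F,E)^*$). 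The needed inputs are $\Hom(\cL_1,\cL_0)\cong\operatorname{Ext}^1(\cL_0,\cL_1)^*=0$, $\End(\cL_1)=k$, $\operatorname{Ext}^1(\cL_1,\cL_1)\cong k$, and $\operatorname{Ext}^1(\cL_1,\cL_0)\cong V^*$. An adjunction argument identifies the connecting map $\operatorname{Ext}^1(\cL_1,\cL_0)\to\operatorname{Ext}^1(\cL_1,V^*\otimes\cL_1)$ as the identity $V^*\to V^*$, yielding simultaneously $\operatorname{Ext}^1(\cL_1,\cL_2)=0$ and $\Hom(\cL_1,\cL_2)\cong V^*$, which is nonzero. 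A symmetric application of $\Hom(-,\cL_2)$, together with $\End(\cL_i)=k$ for $i=0,1$, will give $\End(\cL_2)=k$.

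The main obstacle is showing that $\cL_2$ is a vector bundle, rather than merely coherent. I would argue by contradiction: if $\cL_2$ had a torsion summand supported at some closed point $x\in X$, then $\iota$ would fail to be a bundle embedding at $x$, so some nonzero $v\in(\cL_0)_x$ would satisfy $\phi_x(v)=0$ for every $\phi\in V$. Using the Atiyah classification, a simple bundle on an elliptic curve is stable; since $n=\dim V>1$ forces $\cL_0\not\cong\cL_1$, the existence of a nonzero morphism $\cL_0\to\cL_1$ between stable bundles implies $\mu(\cL_0)<\mu(\cL_1)$. Torsion in $\cL_2$ would then produce an intermediate torsion-free subsheaf $\cL_0\subsetneq\mathcal{F}\subset V^*\otimes\cL_1\cong\cL_1^{\oplus n}$ of the same rank as $\cL_0$ with $\mu(\mathcal{F})>\mu(\cL_0)$; projecting $\mathcal{F}$ to a suitable summand would yield a map $\cL_0\to\cL_1$ that fails to be injective (because of the common zero along $v$ at $x$), contradicting the injective pair property of $(\cL_0,\cL_1)$.

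Finally, for injectivity of every nonzero $\psi:\cL_1\to\cL_2$: stability of $\cL_1$ implies a nonzero proper kernel $K\subsetneq\cL_1$ would satisfy $\mu(K)<\mu(\cL_1)$, so the image $\cL_1/K\hookrightarrow\cL_2$ (torsion-free since $\cL_2$ is now a bundle) would have slope strictly greater than $\mu(\cL_1)$. Using the explicit isomorphism $\Hom(\cL_1,\cL_2)\cong V^*$ from the earlier cohomology calculation, every nonzero $\psi$ corresponds to a linear functional on $V$, and non-injectivity of $\psi$ translates to a degeneracy of the corresponding linear combination of the $\phi_i$ at the support of $K$, again contradicting the injective pair hypothesis on $(\cL_0,\cL_1)$ and forcing $K=0$.
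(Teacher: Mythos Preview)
Your cohomological computation of $\Hom(\cL_1,\cL_2)\cong V^*$ and $\operatorname{Ext}^1(\cL_1,\cL_2)=0$ is fine, but the remaining three steps all have genuine gaps. First, ``a symmetric application of $\Hom(-,\cL_2)$'' does not yield $\End(\cL_2)=k$: the long exact sequence together with $\dim\Hom(\cL_0,\cL_2)=n^2$ only produces the identity $\dim\End(\cL_2)=\dim\operatorname{Ext}^1(\cL_2,\cL_2)$, which is Serre duality and carries no information. You still need an independent reason for simplicity (or indecomposability) of $\cL_2$. Second, your torsion argument fails at the key step: having a fibre vector $v\in(\cL_0)_x$ with $\phi_x(v)=0$ for all $\phi\in V$ does \emph{not} contradict the injective-pair hypothesis, because injectivity of $\phi\colon\cL_0\to\cL_1$ as a sheaf map is perfectly compatible with $\phi$ dropping rank on a fibre. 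Likewise, projecting your saturation $\mathcal{F}$ to a summand $\cL_1$ and then restricting to $\cL_0\hookrightarrow\mathcal{F}$ simply recovers one of the original $\phi_i$, which is injective; no contradiction arises. Third, in the final paragraph you conflate maps $\cL_0\to\cL_1$ with maps $\cL_1\to\cL_2$; a ``degeneracy of a linear combination of the $\phi_i$'' is not what a kernel of $\psi\colon\cL_1\to\cL_2$ produces.

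The paper sidesteps all of this by quoting \cite[Lemma~3]{rudakov2} to obtain at once that $(\cL_1,\cL_2)$ is a simple pair and that $\pi$ is the canonical map. Simplicity forces $\cL_2$ indecomposable, and since $n>1$ gives $\operatorname{rank}\cL_2>0$, an indecomposable coherent sheaf of positive rank on a smooth curve is automatically locally free; this replaces your torsion argument entirely. For injectivity the paper gives a short diagram chase: if $f\colon\cL_1\to\cL_2$ has nonzero kernel $\mathcal{K}$, choose a splitting $V^*\otimes\cL_1=\cL_1^{n-1}\oplus\cL_1$ with $\pi|_{\text{last}}=f$; then $0\oplus\mathcal{K}\subset\operatorname{im}\iota$, so $\iota^{-1}(0\oplus\mathcal{K})\neq 0$ lies in the kernel of each of the first $n-1$ components $\cL_0\to\cL_1$, and since $n-1>0$ at least one such component is nonzero with nonzero kernel, contradicting injectivity of the pair $(\cL_0,\cL_1)$. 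This is the precise mechanism your last paragraph is groping toward.
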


\begin{proof}
We know from \cite[Lemma~3]{rudakov2} that $(\cL_1,\cL_2)$ is a simple pair and that the morphism $\pi$ above is the canonical one induced by
$$ \operatorname{Hom}(\cL_1,\cL_2) \otimes \cL_1 \arr \cL_2.$$
Suppose there is some non-zero $f \colon \cL_1 \arr \cL_2$ with non-zero kernel $\mathcal{K}$. We may thus write  $\operatorname{Hom}(\cL_1,\cL_2) \otimes \cL_1 = \cL_1^{n-1} \oplus \cL_1$ (an internal direct sum) so that $\pi|_{\cL_1} = f$. We thus obtain a natural commutative square
$$\begin{CD}
\cL_0 @>{\iota}>> \cL_1^{n-1} \oplus \cL_1 \\
@VVV @VVV \\
\cL_0/\iota^{-1}(\mathcal{K}) @>>> \cL_1^{n-1} \oplus \cL_1/\mathcal{K}
\end{CD}.$$
Now $n-1 >0$, so there exists a non-zero morphism $\cL_0 \arr \cL_1$ with kernel $\iota^{-1}(\mathcal{K})$. This contradiction shows that in fact $(\cL_1,\cL_2)$ is an injective pair.

Injectivity and the assumption $n>1$ ensures that the rank of $\cL_1^n$ exceeds that of $\cL_0$. Hence, $\text{rank}\, \cL_2 >0$. But $\cL_2$ is simple and hence indecomposable. It thus follows that it is also a vector bundle. Finally, $\operatorname{Hom}(\cL_1,\cL_2) \cong \operatorname{Hom}(\cL_0,\cL_1)^* \neq 0$ so simplicity ensures $\operatorname{Ext}^1(\cL_1,\cL_2) = 0$.
\end{proof}
The hypothesis in Lemma \ref{lem:EllipticRightMutate} that $n>1$ is needed, for given $p \in X$ we have a canonical short exact sequence
\begin{equation} \label{eq:Op}
 0 \arr \cO \arr \cO(p) \arr \cO_p \arr 0
\end{equation}
and the simple pair $(\cO(p),\cO_p)$ is not an injective pair.

Suppose we are in the situation of \cite[Lemma~3(1)]{rudakov2}, that is, there is a simple pair $(\cL_0,\cL_1)$ with $\operatorname{Ext}^1(\cL_0,\cL_1) = 0$ and a short exact sequence
\begin{equation} \label{eq:HomMutate} 0 \arr \cL_0 \xrightarrow{\iota} V \otimes \cL_1 \xrightarrow{\pi} \cL_2 \arr 0
\end{equation}
where $\iota, \pi$ are canonical maps associated with isomorphisms  $V \cong \Hom(\cL_0,\cL_1)^* \cong \Hom(\cL_1,\cL_2)$. We shall call such a sequence a {\em Hom-mutation sequence}.

%The dual (into $\cO$) of such a sequence may not remain a Hom-mutation sequence. For example, the long exact sequence obtained by applying $\mathcal{H}om_X(-,\cO)$ to (\ref{eq:Op}) is the short exact sequence
%$$ 0 \arr \cO(-p) \arr \cO \arr \cO_p \arr 0 $$
%which is instead the sequence resulting from an Ext-mutation.

\begin{prop}  \label{prop:DualMutate}
Suppose that $(\mathcal{L}_{0}, \mathcal{L}_{1})$ is a simple pair of bundles and $\cL_2$ is also a bundle. Then the dual exact sequence to (\ref{eq:HomMutate})
$$  0 \arr \cL_2^* \xrightarrow{\pi^*} V^* \otimes \cL_1^* \xrightarrow{\iota^*} \cL_0^* \arr 0$$
is also a Hom-mutation sequence.
\end{prop}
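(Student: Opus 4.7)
The plan is to dualize the Hom-mutation sequence (\ref{eq:HomMutate}) directly and verify the three conditions required for the result to be a Hom-mutation sequence: (i) $(\cL_2^*, \cL_1^*)$ is a simple pair with $\operatorname{Ext}^1(\cL_2^*, \cL_1^*)=0$; (ii) there are natural identifications $V^* \cong \Hom(\cL_2^*, \cL_1^*)^* \cong \Hom(\cL_1^*, \cL_0^*)$; and (iii) $\pi^*$, $\iota^*$ coincide with the canonical maps associated with these identifications. Since $\cL_0, \cL_1, \cL_2$ are locally free, the functor $(-)^*$ is exact on them, so that the dualized sequence is short exact; what remains is then (i)--(iii).

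For (i), I would first note that $\operatorname{End}(\cL_i^*) \cong \operatorname{End}(\cL_i)^{\mathrm{op}} = k$, so $\cL_1^*$ and $\cL_2^*$ are each simple. By the argument from the proof of Lemma \ref{lem:EllipticRightMutate} (citing \cite[Lemma~3]{rudakov2}), $(\cL_1, \cL_2)$ is itself a simple pair; since $\Hom(\cL_1, \cL_2) \cong V \ne 0$, the other extension group must vanish, i.e.\ $\operatorname{Ext}^1(\cL_1, \cL_2) = 0$. Serre duality on the elliptic curve $X$ (where $\omega_X \cong \cO_X$) then forces $\Hom(\cL_2, \cL_1) = 0$. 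Using the natural isomorphism $\Hom(E, F) \cong \Hom(F^*, E^*)$ of vector bundles, I get $\Hom(\cL_2^*, \cL_1^*) \cong \Hom(\cL_1, \cL_2) \ne 0$ and $\operatorname{Ext}^1(\cL_2^*, \cL_1^*) \cong \Hom(\cL_1^*, \cL_2^*)^* \cong \Hom(\cL_2, \cL_1)^* = 0$, which settles (i). The same natural duality gives $\Hom(\cL_2^*, \cL_1^*)^* \cong V^*$ and $\Hom(\cL_1^*, \cL_0^*) \cong \Hom(\cL_0, \cL_1) \cong V^*$, yielding (ii).

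For (iii), this is a purely formal compatibility. The transpose, under $\Hom(E, F) \cong \Hom(F^*, E^*)$, of the evaluation map $\Hom(E, F) \otimes E \to F$ is the coevaluation $F^* \to \Hom(F^*, E^*)^* \otimes E^*$; applied to $\pi$, this exhibits $\pi^*$ as the canonical map for the pair $(\cL_2^*, \cL_1^*)$, and a dual computation identifies $\iota^*$ with the canonical evaluation $\Hom(\cL_1^*, \cL_0^*) \otimes \cL_1^* \to \cL_0^*$. The main obstacle, more bookkeeping than substance, is keeping straight the three descriptions of $V$ in play --- as $\Hom(\cL_0, \cL_1)^*$, as $\Hom(\cL_1, \cL_2)$, and their duals for the new pair --- and confirming that the several canonical identifications compose coherently. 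Since every map involved is natural in its arguments, this reduces to a routine diagram chase.
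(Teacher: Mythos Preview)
Your proof is correct and follows essentially the same route as the paper's: dualize the sequence (using that all three sheaves are bundles) and then verify the defining conditions of a Hom-mutation sequence. The one notable difference is in which simple pair you verify. The paper checks directly that $(\cL_1^*,\cL_0^*)$ is a simple pair via $\operatorname{Ext}^i(\cL_1^*,\cL_0^*)\cong H^i(X,\cL_0^*\otimes\cL_1)\cong\operatorname{Ext}^i(\cL_0,\cL_1)$, using only the stated hypothesis on $(\cL_0,\cL_1)$, and then observes that $\iota^*$ is the canonical evaluation under $V^*\cong\Hom(\cL_0,\cL_1)\cong\Hom(\cL_1^*,\cL_0^*)$; the remaining conditions are left implicit in the citation of \cite[Lemma~3]{rudakov2}. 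You instead first pass through $(\cL_1,\cL_2)$ (invoking \cite[Lemma~3]{rudakov2} and Serre duality) to establish that $(\cL_2^*,\cL_1^*)$ is a simple pair with $\operatorname{Ext}^1=0$, and then verify both $\pi^*$ and $\iota^*$ are canonical. Your version is more explicit and hews more literally to the definition; the paper's is shorter but leaves more to the reader.
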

\begin{proof}
The sequence is exact since we assumed that $\cL_0, \cL_1$, and $\cL_2$ are bundles, and $(\cL_1^*,\cL_0^*)$ is a simple pair since $\operatorname{Ext}^i(\cL_1^*,\cL_0^*) \cong H^i(X, \cL_0^* \otimes \cL_1) \cong \operatorname{Ext}^i(\cL_0,\cL_1)$. It remains only to observe that $\iota^*$ is the canonical map obtained by identifying $V^* \cong \operatorname{Hom}(\cL_0,\cL_1)  \cong \operatorname{Hom}(\cL_1^*, \cL_0^*)$.
\end{proof}

\begin{thm}  \label{thm:ExistHelix}
Let $(\cL_0,\cL_1)$ be an injective pair of bundles such that
$$
\dim_{k} \operatorname{Hom}(\cL_0,\cL_1) >1.
$$
Suppose that the simple pair $(\cL_1^*,\cL_0^*)$ is also an injective pair of bundles. Then, up to isomorphism, the pair extends to a unique helix $\ldots, \cL_{-1},\cL_0, \cL_1, \cL_2, \ldots $.
\end{thm}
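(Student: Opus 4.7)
\begin{pf}[Proof plan]
The plan is to build the helix in two halves. For the positive half, I will iteratively apply Lemma \ref{lem:EllipticRightMutate} starting from $(\cL_0,\cL_1)$ to produce $\cL_2,\cL_3,\ldots$ together with their Hom-mutation sequences. The lemma gives that $(\cL_1,\cL_2)$ is again an injective pair of bundles, and the proof of the lemma records the identification $\operatorname{Hom}(\cL_1,\cL_2)\cong\operatorname{Hom}(\cL_0,\cL_1)^{*}$, so $\dim_{k}\operatorname{Hom}(\cL_1,\cL_2)=n>1$. Thus the hypotheses of Lemma \ref{lem:EllipticRightMutate} are preserved and the induction continues indefinitely to the right, producing canonical short exact sequences of the form (\ref{eq:HomMutate}) at each index $i\geq 0$.

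For the negative half I will instead apply the same iterative right-mutation procedure to the dual injective pair $(\cL_1^{*},\cL_0^{*})$, whose injectivity is an explicit hypothesis of the theorem, producing bundles $\cL_{-1}^{*},\cL_{-2}^{*},\ldots$ together with Hom-mutation sequences
$$0\arr \cL_{-i+1}^{*}\arr V_{-i}\otimes \cL_{-i}^{*}\arr \cL_{-i-1}^{*}\arr 0.$$
Setting $\cL_{-i}:=(\cL_{-i}^{*})^{*}$, Proposition \ref{prop:DualMutate} applies (all three terms are bundles by construction) and the dualized sequence is itself a Hom-mutation sequence, namely
$$0\arr \cL_{-i-1}\arr V_{-i}^{*}\otimes \cL_{-i}\arr \cL_{-i+1}\arr 0,$$
where $V_{-i}^{*}\cong \operatorname{Hom}(\cL_{-i-1},\cL_{-i})^{*}\cong \operatorname{Hom}(\cL_{-i},\cL_{-i+1})$. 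These are exactly the Euler sequences indexed by $-i-1\leq -1$ for the extended sequence $\underline{\cL}$.

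With all the objects and sequences in hand, I will verify the five axioms of Definition \ref{defn.helix}. Axioms (1) and (4) are immediate because every $\cL_i$ is simple and $k=\mathbb{C}$ lies in the center, forcing $D_i=k$; axiom (2) follows from Hom-finiteness on $\operatorname{coh}X$; axiom (3) is the Hom-mutation sequence produced in the two steps above (noting that, since $D_{i+1}=k$, the $D_{i+1}$-tensor product coincides with the $k$-tensor product appearing in (\ref{eq:HomMutate})); and axiom (5) is precisely the content of the right-hand Hom identification in the definition of a Hom-mutation sequence, i.e.\ the isomorphism $V\cong\operatorname{Hom}(\cL_1,\cL_2)$ induced by $\pi$, applied at each index.

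For uniqueness, suppose $\underline{\cL}'$ is another helix extending $(\cL_0,\cL_1)$. Axioms (3) and (5) show inductively that $\cL'_{i+2}$ is isomorphic to the cokernel of the canonical evaluation $\operatorname{Hom}(\cL'_{i},\cL'_{i+1})^{*}\otimes\cL'_{i+1}\arr\cL'_{i+1}$ (up to the usual bimodule duality), which for $i\geq 0$ coincides with the cokernel constructed above; going leftward, the Euler sequence at index $i-1$ realizes $\cL'_{i-1}$ as the kernel of the canonical map $\operatorname{Hom}(\cL'_{i},\cL'_{i+1})\otimes\cL'_{i}\arr\cL'_{i+1}$ (via axiom (5)), again canonically determined by $(\cL'_{i},\cL'_{i+1})$. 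Hence $\cL'_i\cong\cL_i$ for every $i$ by induction in both directions. I expect the main subtlety to be not any single axiom check but rather bookkeeping of the Hom identifications so that the dualized sequences of Proposition \ref{prop:DualMutate} match up on the nose with the Euler sequences required by Definition \ref{defn.helix}(3), (5); in particular, ensuring the ${}^{*}\operatorname{Hom}$ appearing in (3) is naturally identified with the vector space $V_{-i}^{*}$ coming out of the dualization.
\end{pf}
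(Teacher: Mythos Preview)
Your proposal is correct and follows essentially the same route as the paper: extend to the right by iterating Lemma~\ref{lem:EllipticRightMutate}, extend to the left by iterating the same lemma on the dual pair $(\cL_1^*,\cL_0^*)$, and then invoke Proposition~\ref{prop:DualMutate} to convert the dual Hom-mutation sequences into the required Euler sequences for negative indices. The paper's proof is terser than yours---it stops at ``we thus obtain a full helix'' without spelling out the axiom checks of Definition~\ref{defn.helix} or the uniqueness argument---so your additional verification of axioms (1)--(5) and your inductive uniqueness paragraph are welcome elaborations rather than departures.
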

\begin{proof}
Lemma~\ref{lem:EllipticRightMutate} and induction shows that we can extend to a helix $\{ \cL_i \}_{i \geq 0}$ on the right, i.e. we have Hom-mutation sequences of the form
$$  0 \arr \cL_i \xrightarrow{\iota} V_i \otimes \cL_{i+1} \xrightarrow{\pi} \cL_{i+2} \arr 0$$
for all $i \geq 0$. The same reasoning applied to $(\cL_1^*,\cL_0^*)$ shows we have Hom-mutation sequences of the form
$$  0 \arr \cL^*_{i+2} \arr V^*_i \otimes \cL^*_{i+1} \arr \cL^*_i \arr 0 $$
for $i < 0$. Proposition~\ref{prop:DualMutate} shows that the duals of these Hom-mutation sequences are still Hom-mutation sequences. We thus obtain a full helix $(\cL_i)_{i \in \mathbb{Z}}$.
\end{proof}
For the remainder of the paper, we suppose $\cL_{-1},\cL_0$ are line bundles with $d:= \deg \cL_0 - \deg \cL_{-1} >1$.  By Theorem \ref{thm:ExistHelix}, $\cL_{-1},\cL_0$ determine a unique helix $\underline{\mathcal{L}}$.  We let $r_m$ denote the rank of $\mathcal{L}_{m}$, we let $d_m$ be the degree of $\mathcal{L}_{m}$, and we let $\mu_m = d_m/r_m$.

\begin{lemma} \label{lemma.slopeinc}
For all $m \in \mathbb{Z}$, $\mu_m < \mu_{m+1}$.
\end{lemma}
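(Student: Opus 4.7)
The plan is to extract a numerical recursion from the Hom-mutation sequences that define the helix, and then show by a two-sided induction that the sequence of slopes is strictly increasing. The algebra is elementary; the only points requiring care are that the ranks $r_m$ remain strictly positive (so the slopes $\mu_m$ are defined) and that the base case uses the hypothesis $d>1$.

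First I would record the numerical consequence of the Hom-mutation short exact sequence
\[ 0 \to \mathcal{L}_i \to V_i\otimes \mathcal{L}_{i+1} \to \mathcal{L}_{i+2} \to 0 \]
from the proof of Theorem~\ref{thm:ExistHelix}. Taking ranks and degrees (and writing $n_i=\dim_k V_i$) gives the two recurrences
\[ r_{i+2}=n_ir_{i+1}-r_i, \qquad d_{i+2}=n_id_{i+1}-d_i. \]
All $r_j$ are positive because each $\mathcal{L}_j$ is, by construction, a nonzero vector bundle.

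Next I would establish the key implication
\[ \mu_{i+1}>\mu_i \iff \mu_{i+2}>\mu_{i+1}. \]
Cross-multiplying, using $r_j>0$, the inequality $\mu_{i+2}>\mu_{i+1}$ is equivalent to $(n_id_{i+1}-d_i)r_{i+1}>(n_ir_{i+1}-r_i)d_{i+1}$, which simplifies (the $n_i$ terms cancel) to $r_id_{i+1}>d_ir_{i+1}$, i.e.\ $\mu_{i+1}>\mu_i$. So the inequality $\mu_m<\mu_{m+1}$ propagates in both directions across a Hom-mutation sequence.

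Finally, I would handle the base case $m=-1$: since $\mathcal{L}_{-1}$ and $\mathcal{L}_0$ are line bundles, $\mu_{-1}=\deg \mathcal{L}_{-1}$ and $\mu_0=\deg \mathcal{L}_0$, so $\mu_0-\mu_{-1}=d>1>0$. Inducting forward with the key implication gives $\mu_m<\mu_{m+1}$ for all $m\geq -1$, and inducting backward using the equivalence applied to the sequences $0\to \mathcal{L}_{i}\to V_{i}\otimes \mathcal{L}_{i+1}\to \mathcal{L}_{i+2}\to 0$ for $i\leq -2$ gives the inequality for all $m<-1$. There is no real obstacle — the most delicate point is simply noting that the equivalence is genuinely an \emph{iff}, so no separate backward argument is needed beyond invoking it in the opposite direction.
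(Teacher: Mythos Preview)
Your proof is correct and follows essentially the same approach as the paper: both establish the base case $\mu_0-\mu_{-1}=d>0$, extract the linear recurrences for $r_m$ and $d_m$ from additivity of rank and degree on the Hom-mutation sequences, and then cross-multiply to show that $\mu_{m-1}<\mu_m$ propagates forward and backward. You are slightly more explicit than the paper in noting that $r_m>0$ is needed for the cross-multiplication and in phrasing the key step as a single biconditional, but the argument is the same.
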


\begin{proof}
First, note that $\mu_0-\mu_{-1}=d$.  Next, suppose $\mu_{m-2}<\mu_{m-1}$.  Since rank and degree are additive on short exact sequences of vector bundles, $d_m=nd_{m-1}-d_{m-2}$ and $r_m=nr_{m-1}-r_{m-2}$.  We note that $d_mr_{m-1}=(nd_{m-1}-d_{m-2})r_{m-1}$ which, by our supposition, is greater than $nd_{m-1}r_{m-1}-d_{m-1}r_{m-2}=d_{m-1}r_{m-2}$.

The proof that if $\mu_{m-1}<\mu_{m}$ then $\mu_{m-2}<\mu_{m-1}$ is similar.
\end{proof}

\begin{lemma} \label{lemma.ext}
Suppose $i, j \in \mathbb{Z}$ and $i<j$.  Then $\operatorname{Ext}^{1}(\mathcal{L}_{i},\mathcal{L}_{j})=0$.
\end{lemma}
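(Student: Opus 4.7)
The plan is to prove this by induction on the gap $j - i \geq 1$, using the Hom-mutation short exact sequences provided by the helix structure (Theorem \ref{thm:ExistHelix}) together with the fact that $\operatorname{Ext}^2$ vanishes on $\operatorname{Coh}(X)$ because $X$ is a smooth curve.

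The base case $j = i+1$ is immediate: by Theorem \ref{thm:ExistHelix}, every consecutive pair $(\mathcal{L}_i, \mathcal{L}_{i+1})$ is an injective pair of bundles, and so $\operatorname{Ext}^1(\mathcal{L}_i, \mathcal{L}_{i+1}) = 0$ by the very definition of injective pair. For the inductive step with $j \geq i+2$, I would apply $\operatorname{Hom}(\mathcal{L}_i, -)$ to the Hom-mutation sequence
$$0 \to \mathcal{L}_{j-2} \to V_{j-2} \otimes \mathcal{L}_{j-1} \to \mathcal{L}_j \to 0$$
to extract the long exact sequence segment
$$V_{j-2} \otimes \operatorname{Ext}^1(\mathcal{L}_i, \mathcal{L}_{j-1}) \to \operatorname{Ext}^1(\mathcal{L}_i, \mathcal{L}_j) \to \operatorname{Ext}^2(\mathcal{L}_i, \mathcal{L}_{j-2}).$$
The right term vanishes for dimension reasons on $X$, and the left term vanishes by the inductive hypothesis (which applies since $(j-1) - i \geq 1$). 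Hence $\operatorname{Ext}^1(\mathcal{L}_i, \mathcal{L}_j) = 0$.

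I do not anticipate any real obstacle: once one observes that the Hom-mutation sequence carries Ext-vanishing one step to the right, the induction runs itself. As a sanity check, a more geometric alternative would be to invoke Serre duality on the elliptic curve (where $K_X \cong \mathcal{O}_X$) to identify $\operatorname{Ext}^1(\mathcal{L}_i, \mathcal{L}_j) \cong \operatorname{Hom}(\mathcal{L}_j, \mathcal{L}_i)^*$ and then kill the Hom by combining the slope inequality from Lemma \ref{lemma.slopeinc} with Atiyah's theorem that indecomposable bundles on an elliptic curve are semistable (each $\mathcal{L}_n$ being simple, hence indecomposable, by Theorem \ref{thm:ExistHelix}). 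The direct induction above is cleaner and avoids any appeal to Atiyah's classification.
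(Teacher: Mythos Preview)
Your inductive argument is correct and is genuinely different from the paper's proof. The paper proceeds exactly along the lines of your ``sanity check'': it applies Serre duality (with $K_X \cong \mathcal{O}_X$) to reduce to $\operatorname{Hom}(\mathcal{L}_j, \mathcal{L}_i) = 0$, rewrites this as $H^0(\mathcal{L}_i \otimes \mathcal{L}_j^*)$, and then uses Atiyah-type facts about indecomposable bundles on an elliptic curve (tensor products of indecomposables decompose into indecomposables of a common slope equal to the sum of the slopes; indecomposables of negative slope have no sections) together with Lemma~\ref{lemma.slopeinc}.

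Your induction trades those structural results about $\operatorname{Coh}(X)$ for two milder inputs: the Hom-mutation sequences supplied by Theorem~\ref{thm:ExistHelix} and the vanishing of $\operatorname{Ext}^2$ on a smooth curve. This is arguably more self-contained and would transplant more readily to other one-dimensional settings. One small wording point: the proof of Theorem~\ref{thm:ExistHelix} does not literally assert that \emph{every} consecutive pair $(\mathcal{L}_i,\mathcal{L}_{i+1})$ is an injective pair---for negative $i$ it is the dual pair $(\mathcal{L}_{i+1}^*,\mathcal{L}_i^*)$ that is shown to be injective---but in either case one obtains $\operatorname{Ext}^1(\mathcal{L}_i,\mathcal{L}_{i+1})=0$ (directly, or via $\operatorname{Ext}^1(\mathcal{L}_{i+1}^*,\mathcal{L}_i^*)\cong H^1(\mathcal{L}_{i+1}\otimes\mathcal{L}_i^*)\cong\operatorname{Ext}^1(\mathcal{L}_i,\mathcal{L}_{i+1})$), which is all your base case requires.
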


\begin{proof}
By Serre duality, it is enough to prove that $\operatorname{Hom}(\mathcal{L}_{j}, \mathcal{L}_{i})=0$, and to prove this, it suffices to prove $\operatorname{Hom }(\mathcal{O}, \mathcal{L}_{i} \otimes \mathcal{L}_{j}^{*})=0$.  Since the tensor product of two indecomposable vector bundles is a direct sum of indecomposable vector bundles all of the same slope, and the value of that slope is the sum of the slopes of $\mathcal{L}_{i}$ and $\mathcal{L}_{j}^{*}$, each summand has slope $\mu_{i}-\mu_j$, which is less than zero by Lemma \ref{lemma.slopeinc}.  Since such indecomposable bundles have no global sections, the result follows.
\end{proof}

%By construction and the fact that $X$ is elliptic, we have that the degree $d:=\deg \cL_i^* \otimes \cL_{i+1} = \chi(\cL^*_i \otimes\cL_{i+1}) = \dim_k \Hom(\cL_i, \cL_{i+1})$ is independent of $i$,  which we call the {\em degree} of the helix.

Let $V := \Hom(\cL_{-1}, \cL_0)$.  From Corollary~\ref{cor.main} there is an algebra homomorphism $\mathbb{S}^{nc}(V) \arr B_{\underline{\cL}}$.

\begin{prop} \label{prop.Bgendeg2}
The algebra $B_{\underline{\cL}}$ is generated in degrees one and two. In fact, $B_{\underline{\cL}}$ is generated as a left or right $\mathbb{S}^{nc}(V)$-module in degrees zero and two.
\end{prop}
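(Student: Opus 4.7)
The plan is to deduce both claims from a single surjectivity statement: for every $i\in\mathbb{Z}$ and every $k\ge i+3$, multiplication
$$
B_{i,k-1}\otimes_{D_{-k+1}} B_{k-1,k}\;\longrightarrow\; B_{i,k}
$$
is surjective. Granted this, the algebra-generation claim follows by induction on $k-i$, since the right tensor factor lies in degree one and the inductive hypothesis places $B_{i,k-1}$ in the subalgebra generated in degrees one and two. For the right $\mathbb{S}^{nc}(V)$-module statement, iterating the surjectivity yields
$$
B_{i,k}\;=\;B_{i,i+2}\cdot B_{i+2,i+3}\cdots B_{k-1,k}
$$
for every $k\ge i+2$; under the degree-one isomorphism of Corollary~\ref{cor.main}, each factor $B_{j,j+1}$ coincides with $\mathbb{S}^{nc}(V)_{j,j+1}$, so $B_{i,k}$ lies in the image of $B_{i,i+2}\otimes\mathbb{S}^{nc}(V)_{i+2,k}$, while $B_{i,i+1}$ is reached from the generator $1\in B_{i,i}$ via $\mathbb{S}^{nc}(V)_{i,i+1}$.

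To prove the surjectivity, apply $\operatorname{Hom}(-,\mathcal{L}_{-i})$ to the $(-k)$th Euler exact sequence
$$
0\longrightarrow\mathcal{L}_{-k}\longrightarrow {}^{*}\operatorname{Hom}(\mathcal{L}_{-k},\mathcal{L}_{-k+1})\otimes_{D_{-k+1}}\mathcal{L}_{-k+1}\longrightarrow\mathcal{L}_{-k+2}\longrightarrow 0.
$$
Finite-dimensionality of $\operatorname{Hom}(\mathcal{L}_{-k},\mathcal{L}_{-k+1})$ over $D_{-k+1}$ (property~(2) of a helix) together with tensor--Hom adjunction produces the standard identification
$$
\operatorname{Hom}\bigl({}^{*}\operatorname{Hom}(\mathcal{L}_{-k},\mathcal{L}_{-k+1})\otimes_{D_{-k+1}}\mathcal{L}_{-k+1},\;\mathcal{L}_{-i}\bigr)\;\cong\; B_{k-1,k}\otimes_{D_{-k+1}} B_{i,k-1},
$$
under which the induced arrow to $B_{i,k}$ becomes composition. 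The cokernel of this composition therefore injects into $\operatorname{Ext}^{1}(\mathcal{L}_{-k+2},\mathcal{L}_{-i})$, which vanishes by Lemma~\ref{lemma.ext} as soon as $-k+2<-i$, i.e.\ $k\ge i+3$.

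The principal subtlety is verifying that the arrow produced from the Euler sequence by $\operatorname{Hom}(-,\mathcal{L}_{-i})$ really does agree with multiplication in the orbit algebra: this reduces to property~(5) of a helix together with a routine bookkeeping check of the tensor--Hom identification. For the symmetric left-module assertion one argues dually, applying $\operatorname{Hom}(\mathcal{L}_{-k},-)$ to the $(-i-2)$th Euler sequence, identifying ${}^{*}\operatorname{Hom}(\mathcal{L}_{-i-2},\mathcal{L}_{-i-1})$ with $B_{i,i+1}$ via property~(5), and again invoking Lemma~\ref{lemma.ext} to kill $\operatorname{Ext}^{1}(\mathcal{L}_{-k},\mathcal{L}_{-i-2})$ in the same range.
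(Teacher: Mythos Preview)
Your argument is correct and rests on the same engine as the paper's proof: apply $\operatorname{Hom}$ to an Euler sequence and kill the obstruction with the $\operatorname{Ext}^1$-vanishing of Lemma~\ref{lemma.ext}. The paper first runs the covariant argument (your final paragraph) to get surjectivity of $B_{i,i+1}\otimes B_{i+1,i+j}\to B_{i,i+j}$ for $j\ge 3$, which handles algebra generation and the \emph{left} $\mathbb{S}^{nc}(V)$-module claim. Where the two proofs diverge is in the \emph{right}-module claim: the paper passes to the dual helix $\underline{\mathcal{L}}^{*}=(\ldots,\mathcal{L}_1^*,\mathcal{L}_0^*,\mathcal{L}_{-1}^*,\ldots)$, reruns the covariant argument there, and then transports the conclusion back via the order-reversing isomorphism $(B_{\underline{\mathcal{L}}})_{i,j}\cong (B_{\underline{\mathcal{L}}^{*}})_{-j,-i}$; you instead apply the contravariant functor $\operatorname{Hom}(-,\mathcal{L}_{-i})$ to the $(-k)$th Euler sequence and read off the surjectivity of $B_{i,k-1}\otimes B_{k-1,k}\to B_{i,k}$ directly. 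Your route is a bit more intrinsic, since it never leaves the original helix and does not use the existence of bundle duals on $X$; the paper's route is slightly more concrete in that it reuses Proposition~\ref{prop.extzero} verbatim. Either way the content is the same.
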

\begin{proof}
By Lemma \ref{lemma.ext} and the proof of Proposition \ref{prop.extzero}, for all $i$ and all $j \geq 3$, multiplication $B_{i,i+1} \otimes B_{i+1, i+j} \rightarrow B_{i, i+j}$ is surjective.  This implies the first statement, as well as the fact that $B_{\underline{\cL}}$ is generated as a left $\mathbb{S}^{nc}(V)$-module in degrees zero and two.  It remains to check that $B_{\underline{\cL}}$ is generated as a right $\mathbb{S}^{nc}(V)$-module in degrees zero and two.  To this end, let $\underline{\mathcal{L}}^{*}$ denote the dual helix $\ldots, \cL_1^*, \cL_0^*, \cL_{-1}^*,\ldots$ and let $B_{\underline{\mathcal{L}}^{*}}$ denote the corresponding orbit algebra.  The same argument as in the first paragraph implies that, if we write $B^{*}:=B_{\underline{\cL}^*}$, then for all $i$ and all $j \geq 3$, multiplication $B_{i,i+1}^* \otimes B_{i+1, i+j}^* \rightarrow B_{i, i+j}^*$ is surjective.  On the other hand, for each $i,j \in \mathbb{Z}$, taking duals gives an isomorphism of vector spaces $(B_{\underline{\mathcal{L}}})_{i,j} \longrightarrow (B_{\underline{\mathcal{L}}^{*}})_{-j,-i}$ which reverses the order of composition.  It follows that, for all $i$ and all $j \geq 3$, multiplication $B_{-i-j, -i-1} \otimes B_{-i-1, -i} \rightarrow B_{-i-j,-i}$ is surjective, and the result follows.

%$B_{\underline{\cL}^*}$ is generated as a left $\mathbb{S}^{nc}(\operatorname{Hom}(\mathcal{L}_{1}^{*}, \mathcal{L}_{0}^{*}))$-module in degrees zero and two.  For each $i,j \in \mathbb{Z}$, taking duals gives an isomorphism of vector spaces $(B_{\underline{\mathcal{L}}})_{i,j} \longrightarrow (B_{\underline{\mathcal{L}}^{*}})_{-j,-i}$ which reverses the order of composition.  The result follows.

%To this end, consider the dual Euler exact sequence
%$$ 0 \arr \cL^*_{i+2} \arr \Hom(\cL_i,\cL_{i+1}) \otimes_k \cL^*_{i+1}
%\arr \cL^*_i \arr 0.$$
%Tensoring with $\cL_{i+n}$ and taking the long exact sequence in global cohomology gives the exact sequence
%$$ \Hom(\cL_i,\cL_{i+1}) \otimes_k \Hom(\cL_{i+1},\cL_{i+n}) \xrightarrow{\mu} \Hom(\cL_i,\cL_{i+n}) \arr H^1(\cL_{i+2}^* \otimes \cL_{i+n})$$
%where $\mu$ is multiplication.  If $n \geq 3$ then $ H^1(\cL_{i+2}^* \otimes \cL_{i+n}) = \operatorname{Ext}^1(\cL_{i+2},\cL_{i+n}) = 0$ by Lemma~\ref{lemma.ext} so $\mu$ is surjective. This shows that the right $\mathbb{S}^{nc}(V)$-module $B_{\underline{\cL}}$ is generated in degrees zero and two.
\end{proof}

We next examine the Hilbert series of $B:= B_{\underline{\cL}}$.  Let $s_n:= \dim_{k} \mathbb{S}^{nc}(V)_{i,i+n}$ for $i \in \mathbb{Z}$.  Then the Euler exact sequence \cite[Corollary 3.5]{abstractp1} gives $h_{e_{i}\mathbb{S}^{nc}(V)}(t) = \sum_{n\geq 0} s_n t^n = \frac{1}{1 -dt + t^2}$, which is independent of $i$. Let us fix some index $i$ and let $h_{B}(t) := \sum_{n\geq 0} (\dim_k B_{i,i+n})t^n$ be the Hilbert series of $e_iB$.

\begin{prop}  \label{prop.HilbElliptic}
The Hilbert series $h_B(t) = \frac{1+t^2}{1-dt+t^2}$.
\end{prop}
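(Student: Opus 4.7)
The plan is to compute the dimensions $a_n := \dim_k B_{0,n}$ directly via Riemann--Roch on the elliptic curve $X$, and then verify that they satisfy a three-term linear recurrence precisely encoded by the denominator $1 - dt + t^2$.

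First I would record the basic numerical invariants. Let $r_m$ and $d_m$ denote the rank and degree of $\mathcal{L}_m$; then $r_{-1} = r_0 = 1$ and $d_0 - d_{-1} = d$. Property (5) of a helix forces $\dim_k \operatorname{Hom}(\mathcal{L}_i,\mathcal{L}_{i+1})$ to be constant in $i$ and hence equal to $d$. Taking ranks and degrees of the $i$th Euler sequence therefore yields the Chebyshev-type recurrences
$$r_{m+2} = d\, r_{m+1} - r_m, \qquad d_{m+2} = d\, d_{m+1} - d_m.$$

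Next I would compute $a_n$. By Lemma \ref{lemma.ext}, for $n \geq 1$ we have $\operatorname{Ext}^1(\mathcal{L}_{-n}, \mathcal{L}_0) = 0$, so Riemann--Roch on $X$ (using $\chi(\mathcal{E}) = \deg \mathcal{E}$ for any vector bundle on an elliptic curve) gives
$$a_n = \chi(\mathcal{L}_{-n}^* \otimes \mathcal{L}_0) = r_{-n} d_0 - d_{-n}.$$
Plugging the Chebyshev recurrences into this formula shows that $a_n = d\, a_{n-1} - a_{n-2}$ for all $n \geq 3$. The low-degree values I would compute by hand: $a_0 = \dim \operatorname{End}(\mathcal{L}_0) = 1$ since $\mathcal{L}_0$ is a line bundle; $a_1 = d$ by definition; and a short calculation using $r_{-2} = d-1$ and $d_{-2} = (d-1)d_0 - d^2$ gives $a_2 = d^2$. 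Assembling these,
$$(1 - dt + t^2)\,h_B(t) = a_0 + (a_1 - d\,a_0)t + (a_2 - d\,a_1 + a_0)t^2 + \sum_{n\geq 3}(a_n - d\,a_{n-1} + a_{n-2})t^n = 1 + t^2,$$
from which the formula follows.

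The only real subtlety is the ``correction'' at $t^2$: if one naively tried to extend the Riemann--Roch formula $a_n = r_{-n} d_0 - d_{-n}$ down to $n=0$, one would get $r_0 d_0 - d_0 = 0$ instead of the true value $a_0 = 1$. This discrepancy of $1$ is exactly $\dim H^1(X, \mathcal{O}_X)$, reflecting the genus of $X$, and it is precisely this genus-$1$ term that produces the numerator $1 + t^2$ rather than simply $1$ in the Hilbert series. So the proof really reduces to carefully bookkeeping this correction in the low-degree terms.
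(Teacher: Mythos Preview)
Your proof is correct and follows essentially the same approach as the paper: both compute $\dim_k B_{i,i+n}$ for $n\geq 1$ via Riemann--Roch on $X$ using the $\operatorname{Ext}^1$-vanishing of Lemma~\ref{lemma.ext}, deduce the three-term recurrence from additivity of rank and degree on the Euler sequences, and then handle the low-degree terms by hand. The only cosmetic difference is in the final packaging: the paper matches $b_n$ against $s_n + s_{n-2}$ (where $s_n = \dim_k \mathbb{S}^{nc}(V)_{i,i+n}$), whereas you multiply $h_B(t)$ directly by $1 - dt + t^2$ and read off the numerator.
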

\begin{proof}
Let $b_n := \chi(\cL_{-i-n}^* \otimes \cL_{-i})$ so for $n \geq 1$, Lemma~\ref{lemma.ext} ensures that $b_n = \dim_k B_{i,i+n}$.  Since $X$ is an elliptic curve,
$$ b_n = \deg (\cL_{-i-n}^* \otimes \cL_{-i}) = r_{-i-n}d_{-i} - r_{-i}d_{-i-n}.$$
The Euler exact sequence and additivity of rank and degree show that the sequences of $\{d_n\}$ and $\{r_n\}$ both satisfy the recurrence relation
\begin{equation} \label{eq.recur}
a_{n+1} - d a_n + a_{n-1} = 0.
\end{equation}
Furthermore, the sequence $\{s_n\}$ satisfies (\ref{eq.recur}) for $n\geq 0$ \cite[Theorem 3.4]{abstractp1}. Our formula for $b_n$ also shows that $\{b_n\}$ satisfies (\ref{eq.recur}). We calculate some initial values for $s_n$.
$$ s_0 = 1, \ s_1 = d, \ s_2 = d^2 -1, \ s_3 = d^3 -2d.$$
The initial values for $b_n$ are
$$ b_0 = 0, \ b_1 = d, \ b_2 = d^2 = s_2 + s_0, \ b_3 = d^3-d = s_3 + s_1.$$
Now solutions to (\ref{eq.recur}) are completely determined by two consecutive values so we see immediately that $b_n = s_n + s_{n-2}$ for $n \geq 2$. The proposition now follows.
\end{proof}

\begin{cor}  \label{cor.Bdoublecover}
For any index $i$, we have a direct sum decomposition of $\mathbb{S}^{nc}(V)$-modules $e_i B = e_i \mathbb{S}^{nc}(V) \oplus f_{i+2} \mathbb{S}^{nc}(V)$ where $f_{i+2} \in B_{i,i+2}$ is any element not in $e_i \mathbb{S}^{nc}(V)_{i,i+2}$. In particular, the morphism $\mathbb{S}^{nc}(V) \arr B$ is injective.
\end{cor}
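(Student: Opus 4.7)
The strategy is to construct an explicit homomorphism of graded right $\mathbb{S}^{nc}(V)$-modules
$$
\phi \colon e_i \mathbb{S}^{nc}(V) \oplus e_{i+2} \mathbb{S}^{nc}(V) \longrightarrow e_i B, \qquad (a,b) \longmapsto a + f_{i+2} \cdot b,
$$
where the first summand maps via the algebra homomorphism of Corollary~\ref{cor.main} and the second via left multiplication by $f_{i+2}$. This is $\mathbb{S}^{nc}(V)$-linear because $f_{i+2} = e_i f_{i+2} e_{i+2}$, so left multiplication by $f_{i+2}$ commutes with the right $\mathbb{S}^{nc}(V)$-action. The plan is to show $\phi$ is an isomorphism; the direct sum decomposition and the injectivity statement both follow immediately from this.

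First I would verify surjectivity using Proposition~\ref{prop.Bgendeg2}, which tells us that $e_i B$ is generated as a right $\mathbb{S}^{nc}(V)$-module in degrees zero and two. The degree zero generator is $e_i$ itself, so it suffices to show that every element of $B_{i,i+2}$ lies in the image of $\phi$ in degree two. Here we use that each $D_j = \operatorname{End}(\mathcal{L}_j)$ equals $k$ (since $\mathcal{L}_{-1}$ and $\mathcal{L}_0$ are line bundles on the elliptic curve, and helix property (4) propagates this to all $j$), so $\mathbb{S}^{nc}(V)_{i+2,i+2} = k$ and the degree two image of $\phi$ is exactly $\mathbb{S}^{nc}(V)_{i,i+2} + k \cdot f_{i+2}$. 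By Proposition~\ref{prop.HilbElliptic}, $\dim B_{i,i+2} - \dim \mathbb{S}^{nc}(V)_{i,i+2} = d^2 - (d^2 - 1) = 1$, so by the choice of $f_{i+2}$ this sum is all of $B_{i,i+2}$.

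Second, I would match Hilbert series. The domain has degree $i+n$ component of dimension $s_n + s_{n-2}$ (with $s_{-1} = s_{-2} = 0$), while by the proof of Proposition~\ref{prop.HilbElliptic} we have $\dim B_{i,i+n} = b_n = s_n + s_{n-2}$ for $n \geq 1$ (and the $n=0$ case is trivial). Thus the surjection $\phi$ is a dimension-preserving map of finite-dimensional spaces in each degree and hence an isomorphism. In particular each summand embeds, giving both the direct sum decomposition and the injectivity of $e_i\mathbb{S}^{nc}(V) \to e_i B$; since this holds for every index $i$, the map $\mathbb{S}^{nc}(V) \to B$ is injective. The only conceptual input beyond the already-proved Propositions~\ref{prop.Bgendeg2} and~\ref{prop.HilbElliptic} is the observation that $D_{i+2} = k$, so I do not anticipate a genuine obstacle here.
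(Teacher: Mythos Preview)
Your proof is correct and follows essentially the same approach as the paper: construct the multiplication map $e_i\mathbb{S}^{nc}(V)\oplus f_{i+2}\mathbb{S}^{nc}(V)\to e_iB$, use Proposition~\ref{prop.Bgendeg2} together with the degree-two dimension count to get surjectivity, and then compare Hilbert series via Proposition~\ref{prop.HilbElliptic} to conclude injectivity. Your additional remark that $D_{i+2}=k$ makes explicit a point the paper leaves implicit, but otherwise the arguments coincide.
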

\begin{proof}
Our computations in the proof of Proposition~\ref{prop.HilbElliptic} show that $\dim_k B_{i,i+2} = \dim_k \mathbb{S}^{nc}(V)_{i,i+2}+1$. Hence from Proposition~\ref{prop.Bgendeg2}, we know that multiplication induces a surjective $\mathbb{S}^{nc}(V)$-module morphism $e_i \mathbb{S}^{nc}(V) \oplus f_{i+2} \mathbb{S}^{nc}(V) \arr e_i B$. Proposition~\ref{prop.HilbElliptic} shows that it is also injective since the domain and codomain have the same Hilbert series.
\end{proof}
This corollary is known in the case $d=2$ and the helix is generated by line bundles, where it essentially reflects the fact that the complete linear system of a degree two line bundle expresses the elliptic curve as a double cover of $\mathbb{P}^1$ ramified at 4 points.

\begin{cor} \label{cor.nonnoetherian}
If $d>2$, then the $\mathbb{Z}$-algebra $B_{\underline{\mathcal{L}}}$ is nonnoetherian.
\end{cor}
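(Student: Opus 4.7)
The plan is to lift non-noetherianness from the subalgebra $A := \mathbb{S}^{nc}(V)$ to $B := B_{\underline{\mathcal{L}}}$, exploiting the fact that Corollary~\ref{cor.Bdoublecover} exhibits $A \hookrightarrow B$ as a split inclusion on the right. The key external input will be that for $d > 2$ the algebra $A$ itself is non-noetherian: by part (3) of Theorem~\ref{thm.main}, ${\sf cohproj }A \simeq \mathbb{P}^{1}_{d}$, and the non-noetherianness of Piontkovski's $\mathbb{P}^{1}_{d}$ for $d > 2$ is the main content of \cite{piont}. I would therefore start by fixing an index $i$ and a strictly ascending infinite chain $J_{1} \subsetneq J_{2} \subsetneq \cdots$ of graded right ideals in $e_{i} A$.

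Next I would verify that $B$ is free, hence flat, as both a left and right $A$-module, with $A$ split off as a direct summand on each side. Comparing Hilbert series via Proposition~\ref{prop.HilbElliptic}, left multiplication by $f_{i+2}$ must be an isomorphism of right $A$-modules $e_{i+2} A \to f_{i+2} A$, so the decomposition of Corollary~\ref{cor.Bdoublecover} realizes $e_{i} B$ as a free right $A$-module of rank two with $e_{i} A$ as a direct summand. A symmetric argument using the dual helix, as in the proof of Proposition~\ref{prop.Bgendeg2}, supplies the analogous left module decomposition $B e_{j} = A e_{j} \oplus A g_{j-2}$; in particular $A$ is a direct summand of $B$ as a left $A$-module, and $B$ is faithfully flat on both sides.

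With the chain from the first step, I would then form the induced graded right $B$-ideals $J_{n} B \subseteq e_{i} B$. Tensoring the short exact sequence $0 \to J_{n} \to J_{n+1} \to J_{n+1}/J_{n} \to 0$ on the right over $A$ with $B$ (regarded as a left $A$-module) and applying flatness yields the exact sequence $0 \to J_{n} \otimes_{A} B \to J_{n+1} \otimes_{A} B \to (J_{n+1}/J_{n}) \otimes_{A} B \to 0$. Because $A$ is a left $A$-module direct summand of $B$, the nonzero module $J_{n+1}/J_{n}$ embeds as a direct summand of the quotient, which is therefore nonzero. Identifying $J_{n} \otimes_{A} B$ with its injective image $J_{n} B$ inside $B$ via flatness, this forces $J_{n} B \subsetneq J_{n+1} B$, and the chain fails to stabilize in $B$.

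The main obstacle is the invocation of Piontkovski's non-noetherianness of $\mathbb{P}^{1}_{d}$ for $d > 2$; once that external input is granted, the rest is a routine faithful-flatness argument resting on the double-cover decomposition $e_{i} B = e_{i} A \oplus f_{i+2} A$ that is already in hand.
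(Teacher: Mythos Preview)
Your argument is correct and follows the same broad strategy as the paper: both use the double-cover decomposition of Corollary~\ref{cor.Bdoublecover} together with the nonnoetherianness of $A := \mathbb{S}^{nc}(V)$ to conclude that $B$ is nonnoetherian. The execution differs, however. The paper simply invokes \cite[Theorem~1.2]{subex} (the Stephenson--Zhang subexponential-growth theorem): since $e_iB$ is a free right $A$-module of rank two, $B$ inherits the exponential growth of $A$ for $d>2$, and a right noetherian locally finite graded algebra must have subexponential growth. You instead run an explicit faithful-flatness transfer, tensoring an ascending chain in $e_iA$ up to $e_iB$; this requires you to supply the \emph{left} $A$-module decomposition $Be_j = Ae_j \oplus Ag_{j-2}$, which is not stated in the paper but does follow, as you note, from Proposition~\ref{prop.Bgendeg2} and the Hilbert-series count. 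Your route is more self-contained and avoids the external growth theorem, at the cost of a little extra bookkeeping on the left side.

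Two minor remarks. First, your appeal to part~(3) of Theorem~\ref{thm.main} is a forward reference; in the paper's logical order that theorem is proved last, though its part~(3) does not in fact depend on the present corollary, so there is no circularity. Second, the phrase ``non-noetherianness of Piontkovski's $\mathbb{P}^1_d$'' is a slight abuse: $\mathbb{P}^1_d$ is a category, and what you need is that its coordinate $\mathbb{Z}$-algebra $\mathbb{S}^{nc}(V)$ is nonnoetherian. This is most cleanly seen directly from the Hilbert series $h_A(t) = (1-dt+t^2)^{-1}$, which has exponential growth for $d>2$, together with \cite{subex}; the paper takes this fact as known.
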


\begin{proof}
The result follows from Corollary \ref{cor.Bdoublecover} and \cite[Theorem 1.2]{subex} in light of the fact that $\mathbb{S}^{nc}(V)$ is nonnoetherian.
\end{proof}

\begin{proposition}  \label{prop.limitmu}
Let $\mu_m$ denote the slope of $\cL_m$.  Then, as $m \arr -\infty$,
$$ \mu_m \arr -\frac{2d}{d-2 + \sqrt{d^2-4}}.$$
\end{proposition}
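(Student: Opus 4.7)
The plan is to exploit the second-order linear recurrence satisfied by both the ranks and the degrees, then extract the limit from the dominant eigenvalue. As was noted in the proof of Proposition \ref{prop.HilbElliptic}, the Euler-type short exact sequences forming the helix (together with additivity of rank and degree on short exact sequences of vector bundles) imply that both $\{r_m\}$ and $\{d_m\}$ satisfy
$$ a_{m+1} - d\, a_m + a_{m-1} = 0. $$
The characteristic equation $\lambda^2 - d\lambda + 1 = 0$ has roots $\lambda_\pm = (d \pm \sqrt{d^2-4})/2$; for $d > 2$ these are distinct, real, and positive, with $\lambda_+ > 1 > \lambda_- > 0$ and $\lambda_+ \lambda_- = 1$.

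Next I would write closed forms $r_m = A\lambda_+^m + B\lambda_-^m$ and $d_m = C\lambda_+^m + D\lambda_-^m$, and determine the coefficients from initial data. Working in the setting of Theorem \ref{thm.main} where $\mathcal{L}_{-1} = \mathcal{O}_X$ and $\mathcal{L}_0 = \mathcal{O}_X(dp)$, the initial values are $r_{-1} = r_0 = 1$, $d_{-1} = 0$, $d_0 = d$. Using $\lambda_\pm^{-1} = \lambda_\mp$, a short linear-algebra computation yields
$$ B = \frac{1 - \lambda_-}{\lambda_+ - \lambda_-}, \qquad D = \frac{-d\,\lambda_-}{\lambda_+ - \lambda_-}. $$

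Since $\lambda_+^m \to 0$ and $\lambda_-^m \to +\infty$ as $m \to -\infty$, the $\lambda_-^m$ terms dominate both numerator and denominator of $\mu_m = d_m/r_m$, so
$$ \lim_{m \to -\infty} \mu_m \;=\; \frac{D}{B} \;=\; \frac{-d\,\lambda_-}{1 - \lambda_-}. $$
Using $\lambda_- = 1/\lambda_+$, this rewrites as $-d/(\lambda_+ - 1)$, and substituting $\lambda_+ - 1 = (d - 2 + \sqrt{d^2-4})/2$ gives the claimed value $-2d/(d - 2 + \sqrt{d^2-4})$.

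I do not expect a serious obstacle; the argument is a textbook application of linear recurrences. The only mild subtlety is choosing initial conditions consistent with the normalization $\mathcal{L}_{-1} = \mathcal{O}_X$ used in Theorem \ref{thm.main}, as tensoring the entire helix by a degree-$e$ line bundle produces an isomorphic orbit algebra but shifts every slope by $e$; this is why the universal formula is stated relative to that normalization. The remaining algebraic manipulation is straightforward.
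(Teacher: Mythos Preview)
Your proposal is correct and follows essentially the same route as the paper: both solve the linear recurrence $a_{m+1}-da_m+a_{m-1}=0$ for ranks and degrees via the characteristic roots $(d\pm\sqrt{d^2-4})/2$ and read off the limiting slope from the coefficients of the dominant term. The only cosmetic difference is that the paper first passes to the dual helix $(\mathcal{L}_{-m}^*)$ so as to take $m\to+\infty$ and then negates the degree at the end, whereas you work directly with $\mathcal{L}_m$ and let $m\to-\infty$; your remark about the normalization $\mathcal{L}_{-1}=\mathcal{O}_X$ is apt, since the stated formula indeed presupposes $\deg\mathcal{L}_{-1}=0$.
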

\begin{proof}
Consider the dual helix $\ldots, \cL_1^*, \cL_0^*, \cL_{-1}^*,\ldots$ and let $d_m = \deg \cL_{-m}^*, r_m = \text{rank}\, \cL_{-m}^*$. We can compute these using the recurrence relation (\ref{eq.recur}). Let $\alpha_{\pm} = \frac{1}{2}(d \pm \sqrt{d^2-4})$ be the roots of the characteristic equation so
$$ d_m = d_+ \alpha_+^m + d_-\alpha_-^m, \quad r_m = r_+ \alpha_+^m + r_-\alpha_-^m$$
where initial conditions give
$$ d_+ = \frac{d\alpha_+}{\alpha_+ - \alpha_-}, \quad r_+ = \frac{\alpha_+-1}{\alpha_+ - \alpha_-}.$$
But
$$
\lim_{m \arr \infty} \frac{d_{m}}{r_{m}} =
\lim_{m\arr \infty} \frac{d_+ \alpha_+^m + d_-\alpha_-^m}{r_+ \alpha_+^m + r_-\alpha_-^m} = \frac{d_+}{r_+}= \frac{d\alpha_+}{\alpha_+ -1} = \frac{2d}{d-2 + \sqrt{d^2-4}}.
$$
The result follows from the fact that if $\mathcal{L}$ is an indecomposable vector bundle over $X$ of degree $d$, then the degree of $\mathcal{L}^{*}$ is $-d$.
\end{proof}

We refer to the limiting slope in Proposition \ref{prop.limitmu} as $\theta_{d}$.

Now we recall some terminology from \cite{polish1}.  For $\theta \in \mathbb{R}$, Polishchuk defines a nonstandard $t$-structure on $D^{b}(X)$, and defines the abelian category ${\sf C}^{\theta}$ as the heart of this $t$-structure.  He then proves that ${\sf C}^{\theta}$ has cohomological dimension one and is derived equivalent to $X$ (\cite[Proposition 1.4]{polish1}).  Furthermore, he shows in \cite{polish3} that ${\sf C}^{\theta}$ is equivalent to the category of holomorphic vector bundles on the noncommutative two-torus $T_{\theta}$ (see \cite[Section 2.1]{polish1} for a definition of $T_{\theta}$).

\begin{prop} \label{prop.polish}
The $\mathbb{Z}$-algebra $B_{\underline{\mathcal{L}}}$ is coherent and ${\sf cohproj }B_{\underline{\mathcal{L}}}$ is equivalent to the noncommutative elliptic curve ${\sf C}^{\theta_{d}}$.
\end{prop}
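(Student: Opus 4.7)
The strategy is to realize $\underline{\mathcal{L}}$ as a coherent ample sequence inside Polishchuk's category ${\sf C}^{\theta_d}$ and then invoke the reconstruction theorem \cite[Theorem 2.4]{polish}. The first task is to check that each $\mathcal{L}_i$ actually lies in the heart ${\sf C}^{\theta_d} \subset D^b(X)$. By Proposition~\ref{prop.limitmu}, together with Lemma~\ref{lemma.slopeinc}, every $\mu_i$ satisfies $\mu_i > \theta_d$ (the sequence of slopes decreases monotonically to $\theta_d$ as $i \to -\infty$), and each $\mathcal{L}_i$ is an indecomposable vector bundle. From the description of ${\sf C}^{\theta}$ via the slope-truncated $t$-structure on $D^b(X)$, indecomposable bundles of slope strictly greater than $\theta$ sit in the heart concentrated in degree zero, so each $\mathcal{L}_i$ is an object of ${\sf C}^{\theta_d}$. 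Because ${\sf C}^{\theta_d}$ is obtained as a heart in $D^b(X)$ and has cohomological dimension one (Polishchuk's \cite[Proposition 1.4]{polish1}), the groups $\operatorname{Hom}_{{\sf C}^{\theta_d}}(\mathcal{L}_i,\mathcal{L}_j)$ and $\operatorname{Ext}^1_{{\sf C}^{\theta_d}}(\mathcal{L}_i,\mathcal{L}_j)$ coincide with those computed in ${\sf coh}\,X$; in particular the orbit algebra computed inside ${\sf C}^{\theta_d}$ is the same $B_{\underline{\mathcal{L}}}$.

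The main step is to verify that $\underline{\mathcal{L}}$ is an ample sequence in the sense of \cite[Section 2]{polish}. In a hereditary Hom-finite category the Polishchuk ampleness conditions reduce to: (i) for every object $M$ one has $\operatorname{Ext}^1(\mathcal{L}_{-n},M) = 0$ for $n \gg 0$, and (ii) every object admits an epimorphism from a finite direct sum $\bigoplus \mathcal{L}_{-n_k}^{m_k}$ with $n_k \gg 0$. For (i), it suffices to check vanishing on indecomposable objects $M$ in ${\sf C}^{\theta_d}$, where Serre duality on $X$ (which is inherited by ${\sf C}^{\theta_d}$ in the appropriately shifted form) reduces the question to a Hom-vanishing between indecomposables of comparable slopes; since $\mu_{-n}\searrow\theta_d$ and any fixed $M$ has slope bounded away from $\theta_d$, the slope inequality of Lemma~\ref{lemma.ext} applies once $n$ is large enough. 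For (ii), one uses the standard argument that, given $M \in {\sf C}^{\theta_d}$, the evaluation map $\operatorname{Hom}(\mathcal{L}_{-n},M)\otimes \mathcal{L}_{-n} \to M$ becomes surjective for $n \gg 0$, which again follows from slope considerations together with the Euler sequences building the helix. Coherence of $\underline{\mathcal{L}}$ in Polishchuk's sense is then automatic from these estimates and the finite-dimensionality asserted in Proposition~\ref{prop.HilbElliptic}, because Hilbert-series growth of $e_i B_{\underline{\mathcal{L}}}$ controls the finite generation of kernels.

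Once ampleness and coherence of $\underline{\mathcal{L}}$ inside ${\sf C}^{\theta_d}$ are established, \cite[Theorem 2.4]{polish} gives an equivalence ${\sf C}^{\theta_d} \simeq {\sf cohproj}\,B_{\underline{\mathcal{L}}}$ and, as part of the same package, yields coherence of the $\mathbb{Z}$-algebra $B_{\underline{\mathcal{L}}}$. The hardest part of the argument is the verification of the ample-sequence axioms: Polishchuk's category ${\sf C}^{\theta_d}$ contains exotic objects (infinite direct limits of bundles, shifts of torsion sheaves by the $t$-structure, etc.), and controlling $\operatorname{Ext}^1(\mathcal{L}_{-n},M)$ and the image of $\operatorname{Hom}(\mathcal{L}_{-n},M)\otimes \mathcal{L}_{-n}\to M$ uniformly as $n\to\infty$ requires a careful slope analysis using the recursion (\ref{eq.recur}) together with the asymptotics $\mu_{-n}\to \theta_d$. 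All other steps are either formal consequences of the helix axioms or of the definition of ${\sf C}^{\theta_d}$.
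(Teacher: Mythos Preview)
Your overall strategy coincides with the paper's: show that $\underline{\mathcal{L}}$ is an ample sequence in ${\sf C}^{\theta_d}$ and then invoke Polishchuk's reconstruction machinery. The paper's proof is considerably shorter, however, because it outsources the verification of the ample-sequence axioms to Polishchuk himself. Once one has checked (via Lemma~\ref{lemma.slopeinc}, Proposition~\ref{prop.limitmu}, and Lemma~\ref{lem:EllipticRightMutate}) that the $\mathcal{L}_m$ are simple bundles with $\theta_d < \mu_m$ for all $m$ and $\mu_m \to \theta_d$ as $m\to -\infty$, the proofs of \cite[Theorems~3.2 and~3.5]{polish1} already establish conditions (i) and (ii) in ${\sf C}^{\theta_d}$. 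Your hand-sketch of the slope/Serre-duality argument for (i) and (ii) is, in effect, a paraphrase of what Polishchuk does in those proofs; the paper simply cites them rather than reproducing the details.

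There is one genuine gap in your write-up: the assertion that ``Hilbert-series growth of $e_i B_{\underline{\mathcal{L}}}$ controls the finite generation of kernels'' and hence yields coherence of the sequence is not a valid inference. Knowing the Hilbert function of a graded (or $\mathbb{Z}$-)algebra does not by itself imply coherence---there exist algebras of polynomial growth that are not coherent---so Proposition~\ref{prop.HilbElliptic} alone cannot carry this load. The paper avoids this issue entirely: once ampleness of the sequence is in hand, coherence of the \emph{algebra} $B_{\underline{\mathcal{L}}}$ follows directly from \cite[Proposition~2.3]{polish}, and the equivalence ${\sf cohproj}\,B_{\underline{\mathcal{L}}}\simeq {\sf C}^{\theta_d}$ from \cite[Section~2.3]{abstractp1}. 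No separate verification of ``coherence of the sequence'' is needed as an intermediate step, so you can delete that part of the argument and cite \cite[Proposition~2.3]{polish} instead.
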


\begin{proof}
By Proposition \ref{prop.limitmu}, $\underline{\mathcal{L}}$ is a sequence such that as $m \arr -\infty$, $\mu_{m} \arr \theta_{d}$.  In addition, by Lemma \ref{lemma.slopeinc}, $\theta_{d} < \mu_{m}$ for all $m$, and by Lemma \ref{lem:EllipticRightMutate}, $\operatorname{End}(\mathcal{L}_{m})=k$ for all $m$. Therefore, by the proof of \cite[Theorem 3.5]{polish1}, the sequence $\underline{\mathcal{L}}$ satisfies the conditions $(i)$ and $(ii)$ from the proof of \cite[Theorem 3.2]{polish1}.  It follows that $\underline{\mathcal{L}}$ is an ample sequence in the sense of \cite[Section 2.3]{abstractp1}, so that $B_{\underline{\mathcal{L}}}$ is coherent by \cite[Proposition 2.3]{polish}, and ${\sf cohproj }B_{\underline{\mathcal{L}}}$ is equivalent to ${\sf C}^{\theta_d}$ by \cite[Section 2.3]{abstractp1}.
\end{proof}

\begin{remark}
Since $\theta_{d}$ is a quadratic irrationality for $d>2$, \cite[Theorem 3.4]{polish1} implies that $C^{\theta_{d}}$ has a $\mathbb{Z}$-graded homogeneous coordinate ring.  Furthermore, this ring is sometimes better behaved than the ring $B_{\underline{\mathcal{L}}}$.  For example, if $d=3$ or $d=4$, one can show, using the proof of \cite[Theorem 3.4]{polish1}, that $C^{\theta_{d}}$ has a Koszul coordinate ring.  In contrast to this, Proposition \ref{prop.Bgendeg2} implies that $B_{\underline{\mathcal{L}}}$ is not even a quadratic algebra.
\end{remark}

\begin{proposition} \label{prop.coh}
The functor $f_{*}:{\sf Gr }B_{\underline{\mathcal{L}}} \rightarrow {\sf Gr } \mathbb{S}^{nc}(V)$ induced by Corollary \ref{cor.main} restricts to a functor on coherent modules
$$
{\sf coh }B_{\underline{\mathcal{L}}} \rightarrow {\sf coh }\mathbb{S}^{nc}(V).
$$
\end{proposition}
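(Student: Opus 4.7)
The plan is to leverage Corollary~\ref{cor.Bdoublecover}, which exhibits each projective generator $e_i B$ of ${\sf Gr}\,B_{\underline{\mathcal{L}}}$ as a finitely generated right $\mathbb{S}^{nc}(V)$-module, and to combine this with the coherence of both rings (Proposition~\ref{prop.polish} for $B_{\underline{\mathcal{L}}}$, and Theorem~\ref{thm.main}(3), as foreshadowed, for $\mathbb{S}^{nc}(V)$). The functor $f_{*}$ is restriction of scalars along the $\mathbb{Z}$-algebra homomorphism of Corollary~\ref{cor.main}; as such it is exact and preserves arbitrary direct sums, since it does not change the underlying graded abelian group.

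The first step is to notice that because $B_{\underline{\mathcal{L}}}$ is coherent, every object $M$ of ${\sf coh}\,B_{\underline{\mathcal{L}}}$ is finitely presented: there exists an exact sequence
$$
\bigoplus_{k=1}^{m} e_{j_{k}} B_{\underline{\mathcal{L}}} \xrightarrow{\ \varphi\ } \bigoplus_{k=1}^{n} e_{i_{k}} B_{\underline{\mathcal{L}}} \longrightarrow M \longrightarrow 0.
$$
Applying the exact functor $f_{*}$ and using its compatibility with finite direct sums yields
$$
\bigoplus_{k=1}^{m} f_{*}(e_{j_{k}} B_{\underline{\mathcal{L}}}) \xrightarrow{\ f_{*}\varphi\ } \bigoplus_{k=1}^{n} f_{*}(e_{i_{k}} B_{\underline{\mathcal{L}}}) \longrightarrow f_{*}M \longrightarrow 0
$$
in ${\sf Gr}\,\mathbb{S}^{nc}(V)$.

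The second step is to invoke Corollary~\ref{cor.Bdoublecover}, which identifies $f_{*}(e_{i} B_{\underline{\mathcal{L}}})$ with $e_{i}\mathbb{S}^{nc}(V) \oplus f_{i+2}\mathbb{S}^{nc}(V)$. In particular, each such summand is generated over $\mathbb{S}^{nc}(V)$ by the two homogeneous elements $e_{i}$ and $f_{i+2}$, and so the two leftmost terms of the displayed sequence are finitely generated right $\mathbb{S}^{nc}(V)$-modules. Consequently $f_{*}M$ is the cokernel of a map between finitely generated $\mathbb{S}^{nc}(V)$-modules: it is itself finitely generated (being a quotient of $\bigoplus f_{*}(e_{i_{k}} B_{\underline{\mathcal{L}}})$), and the kernel of the surjection onto it is a quotient of $\bigoplus f_{*}(e_{j_{k}} B_{\underline{\mathcal{L}}})$, hence also finitely generated. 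Therefore $f_{*}M$ is finitely presented over $\mathbb{S}^{nc}(V)$.

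The third and final step is to conclude from the coherence of $\mathbb{S}^{nc}(V)$, asserted in Theorem~\ref{thm.main}(3), that finitely presented right $\mathbb{S}^{nc}(V)$-modules coincide with coherent ones, so $f_{*}M \in {\sf coh}\,\mathbb{S}^{nc}(V)$. I do not anticipate a genuine obstacle here: the only thing requiring care is the bookkeeping of finite generation and presentation in the $\mathbb{Z}$-algebra graded setting, and the standard equivalence of coherent and finitely presented modules over a coherent ring, both of which transfer routinely from the classical case.
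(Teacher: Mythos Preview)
Your proposal is correct and follows essentially the same approach as the paper: take a finite presentation of a coherent $B_{\underline{\mathcal{L}}}$-module, apply the exact functor $f_{*}$, use Corollary~\ref{cor.Bdoublecover} to rewrite each $f_{*}(e_{i}B_{\underline{\mathcal{L}}})$ as a direct sum of two shifted copies of $e_{i}\mathbb{S}^{nc}(V)$, and then invoke coherence of $\mathbb{S}^{nc}(V)$ (equivalently, that each $e_{i}\mathbb{S}^{nc}(V)$ is a coherent module). The paper's proof is terser but structurally identical; your citation of Theorem~\ref{thm.main}(3) is not circular, since that part is established independently via Proposition~\ref{prop.ncsym} and \cite[Theorem~5.4]{bel}.
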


\begin{proof}
Let $M$ denote a coherent $B_{\underline{\mathcal{L}}}$-module.  Since $f_{*}$ is exact, the finite presentation for $M$ induces a map of the form
$$
\oplus_{i\in I}f_{*}e_{i}B_{\underline{\mathcal{L}}} \rightarrow \oplus_{j\in J}f_{*}e_{j}B_{\underline{\mathcal{L}}}
$$
with $I$ and $J$ finite subsets of integers, and cokernel isomorphic to $f_{*}M$.  The coherence of $f_{*}M$ now follows from Corollary \ref{cor.Bdoublecover} since $e_{i}\mathbb{S}^{nc}(V)$ is a coherent module for all $i \in \mathbb{Z}$.  The result follows from this.
\end{proof}

\begin{cor} \label{cor.pointind}
Let $p_{1},p_{2} \in X$ denote closed points.  Suppose $\underline{\mathcal{L}}_{i}$ is the helix on $X$ generated by the pair $(\mathcal{O}, \mathcal{O}(d\cdot p_{i}))$ with $d \geq 2$.  Then $B_{\underline{\mathcal{L}}_{1}} \cong B_{\underline{\mathcal{L}}_{2}}$.
\end{cor}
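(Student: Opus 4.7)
The plan is to exhibit a $k$-linear autoequivalence of ${\sf coh}\,X$ carrying $\underline{\mathcal{L}}_{1}$ to $\underline{\mathcal{L}}_{2}$ term-by-term, and then to invoke the functoriality of the orbit algebra construction. Since $X$ is a smooth elliptic curve over $k$, after choosing an origin it becomes an abelian group; set $q := p_{1} - p_{2}$ and let $t_{q} \colon X \to X$ denote translation by $q$. The pullback $F := t_{q}^{*}$ is a $k$-linear autoequivalence of ${\sf coh}\,X$. Clearly $F(\mathcal{O}_{X}) \cong \mathcal{O}_{X}$, and the standard identity $t_{q}^{*}\mathcal{O}_{X}(r) \cong \mathcal{O}_{X}(r - q)$, applied to the divisor $d\cdot p_{1}$, gives
$$
F(\mathcal{O}_{X}(d\cdot p_{1})) \;\cong\; \mathcal{O}_{X}(d\cdot(p_{1}-q)) \;=\; \mathcal{O}_{X}(d\cdot p_{2}).
$$

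Next I would transfer the helix across $F$ and use uniqueness. The pair $(\mathcal{O}_{X},\mathcal{O}_{X}(d\cdot p_{i}))$ is an injective pair of line bundles with $\dim_{k}\operatorname{Hom} = d > 1$, and its dual $(\mathcal{O}_{X}(-d\cdot p_{i}),\mathcal{O}_{X})$ is likewise an injective pair, so Theorem~\ref{thm:ExistHelix} produces a helix uniquely (up to isomorphism) extending each pair. Because $F$ is a $k$-linear equivalence, Lemma~\ref{lemma.functorringmap} (the version in Section~3) shows that $F(\underline{\mathcal{L}}_{1})$ is again a helix, and by the computation above it starts from the pair $(\mathcal{O}_{X},\mathcal{O}_{X}(d\cdot p_{2}))$. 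The uniqueness clause of Theorem~\ref{thm:ExistHelix} therefore provides isomorphisms $\phi_{i}\colon F(\mathcal{L}_{1,i}) \xrightarrow{\sim} \mathcal{L}_{2,i}$ for every $i \in \mathbb{Z}$.

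Finally I combine the two parts of Lemma~\ref{lemma.functorringmap} (the version in Section~2). Part~(1) applied to the equivalence $F$ gives an isomorphism of $\mathbb{Z}$-algebras
$$
B_{\underline{\mathcal{L}}_{1}} \xrightarrow{\sim} B_{F(\underline{\mathcal{L}}_{1})},
$$
while part~(2) applied to the family $\{\phi_{i}\}$ gives an isomorphism $B_{F(\underline{\mathcal{L}}_{1})} \xrightarrow{\sim} B_{\underline{\mathcal{L}}_{2}}$. Composing these yields the desired isomorphism $B_{\underline{\mathcal{L}}_{1}} \cong B_{\underline{\mathcal{L}}_{2}}$. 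There is no real obstacle to this argument; the only care required is in checking the identification of the translated line bundle, which is a standard fact about elliptic curves, and in citing the correct instance of Lemma~\ref{lemma.functorringmap} in each step.
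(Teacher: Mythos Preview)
Your proof is correct and follows essentially the same approach as the paper. The paper's one-line argument uses $F = t_{*}$ with $t$ translation by $p_{2}-p_{1}$ and then invokes Lemma~\ref{lemma.functorringmap}; your pullback along translation by $q = p_{1}-p_{2}$ is literally the same autoequivalence (since $t_{*} = (t^{-1})^{*}$ for an automorphism), and you have just made explicit the uniqueness-of-helix step and the separate invocations of the two parts of Lemma~\ref{lemma.functorringmap} that the paper leaves implicit.
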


\begin{proof}
This follows immediately from Lemma \ref{lemma.functorringmap} using $F=t_{*}$ where $t:X \rightarrow X$ is translation by $p_{2}-p_{1}$.
\end{proof}

%Since $\underline{\mathcal{L}}$ is a helix with $(\mathcal{L}_{i+1}, \mathcal{L}_{i})=0$, we note that the hypotheses of Theorem \ref{thm.makemap} hold.  If $\mathcal{L}_{0}=\mathcal{O}(n)$, we write $X_{n}$ instead of ${\sf Proj }B_{\underline{\mathcal{L}}}$, and call it the {\it $n$th noncommutative elliptic curve determined by $X$}.  We note that $\underline{\mathcal{L}}$ is not an ample sequence, as it is not a projective sequence in the sense of \cite[Section 2.3]{abstractp1}.

%The next result follows immediately from Theorem \ref{thm.makemap}.
%\begin{proposition}
%The functor $G$ induces an exact functor
%$$
%\underline{G}:X_{n} \rightarrow \mathbb{P}_{n}^{1}.
%$$
%which has a left adjoint.  Furthermore, when $n=2$, this corresponds to $f_{*}$ where $f:X \rightarrow \mathbb{P}^{1}$ is a degree two surjection.
%\end{proposition}

We conclude with a proof of Theorem \ref{thm.main}.  Part (1) follows from Theorem \ref{thm:ExistHelix} and Corollary \ref{cor.pointind}.  Part (2) follows from Corollary \ref{cor.nonnoetherian} and Proposition \ref{prop.polish}.  Part (3) follows from Proposition \ref{prop.ncsym}, \cite[Theorem 5.4]{bel} and \cite[Corollary 1.3]{abstractp1}. To prove Part (4), we first note that the fact that the hypotheses of Theorem \ref{thm.makemap} hold follows from Lemma \ref{lemma.ext} and its proof, along with Remark \ref{remark.extfinite}.  The fact that the functor (\ref{eqn.geomap}) restricts to a functor ${\sf C}^{\theta_{d}} \longrightarrow \mathbb{P}^{1}_{d}$ follows from Proposition \ref{prop.coh} and the proof of \cite[Lemma 2.2(1)]{abstractp1}, and the fact that it is a double cover follows from Corollary \ref{cor.Bdoublecover}.  The final assertion in (4) follows from Example \ref{example.commagree}.

\end{document}